\newtheorem{theorem}{Theorem}[section]
\newtheorem*{theorem*}{Theorem}
\newtheorem{thm}[theorem]{Theorem}
\newtheorem*{prop*}{Proposition}
\newtheorem{prop}[theorem]{Proposition}
\newtheorem{corollary}[theorem]{Corollary}
\newtheorem{cor}[theorem]{Corollary}
\newtheorem{claim}[theorem]{Claim}
\newtheorem{lem}[theorem]{Lemma}
\newtheorem{conjecture}{Conjecture}
\newtheorem{def-theorem}[theorem]{Theorem-Definition}
\newtheorem*{statement*}{Statement}
\theoremstyle{definition}
\newtheorem*{remark}{Remark}
\newtheorem*{thm*}{Theorem}
\newtheorem{defn}[theorem]{Definition}
\newcommand{\Q}{\mathbb{Q}}
\newcommand{\QQ}{\Q}
\renewcommand{\AA}{\mathbb{A}}
\newcommand{\Z}{\mathbb{Z}}
\newcommand{\ZZ}{\Z}
\newcommand{\N}{\mathbb{N}}
\newcommand{\NN}{\mathbb{N}}
\newcommand{\R}{\mathbb{R}}
\newcommand{\RR}{\R}
\newcommand{\Lm}{\mathcal{L}}
\newcommand{\cL}{\Lm}
\newcommand{\cO}{\mathcal{O}}
\newcommand{\ord}{\mathrm{ord}}
\newcommand{\Lring}{\Lm_{\text{ring}}}
\newcommand{\Lringtwo}{\Lm_{\text{ring,2}}}
\newcommand{\Lpres}{\Lm_{\text{Pres}}}
\newcommand{\ac}{\text{ac}\,}
\newcommand{\acm}{\mathrm{ac}_{m}\,}
\title
{Integration and Cell Decomposition in P-minimal Structures}
\author[1]{Pablo Cubides Kovacsics
\thanks{The research leading to these results has received funding from the European Research Council, ERC Grant nr. 615722, MOTMELSUM, 2014 - 2019. }}
\author[2]{Eva Leenknegt
\thanks{During the realization of this project, the second author was a postdoctoral fellow of the Fund for Scientific Research - Flanders (Belgium) (F.W.O.).}
}
\affil[1]{Laboratoire Paul Painlev\'e, Universit\'e de Lille 1, CNRS U.M.R. 8524, 59655 Villeneuve d'Ascq Cedex, France.}
\affil[2]{Department of Mathematics, KULeuven, Celestijnenlaan 200B, 3001 Heverlee, Belgium.}
\begin{document}

\maketitle
\begin{abstract}
We show that the class of $\Lm$-constructible functions is closed under integration for any $P$-minimal expansion of a $p$-adic field $(K,\Lm)$. This generalizes results previously known for semi-algebraic and sub-analytic structures. 
As part of the proof, we obtain a weak version of cell decomposition and function preparation for  $P$-minimal structures, a result which is independent of the existence of Skolem functions.
A direct corollary is that Denef's results on the rationality of Poincar\'e series hold in any $P$-minimal expansion of a $p$-adic field $(K,\Lm)$.
\end{abstract}


\section{Introduction}

One of the main results of this paper is that, for arbitrary $P$-minimal structures over $p$-adic fields, the class of constructible functions is closed under integration. This generalizes a result which was previously known only for semi-algebraic and sub-analytic structures (and some intermediary cases). 

As part of the proof, we obtain the second main result of this paper: a version of cell decomposition and function preparation for $P$-minimal structures. While our version is somewhat weaker than what was obtained in previous attempts by e.g. Mourgues \cite{mou-09}, it does not depend on the existence of definable Skolem functions. 

In this introduction we give an informal motivation of our approach, discussing the historical connections between $p$-adic integration, rationality of Poincar\'e series, and cell decomposition. We will present exact statements of our main results in the next section.  
\\\\
The study of $p$-adic integrals was motivated by a number-theoretic question. It was conjectured by Borevich and Shafarevich that for $f(x) \in \ZZ[x]$,  Poincar\'e series like e.g. $P(T):= \sum_{m\in \N}N_mT^m$, where 
\[N_m := \#\{x \in (\ZZ/p^m\ZZ)^n\mid f(x) \equiv 0 \mod p^m\},\]
 are rational functions of $T$. This was originally proven by Igusa \cite{igusa-74, igusa-75, igusa-78}. Later, Denef \cite{denef-84} {obtained a similar, more general result. He gave two proofs that were based on Macintyre's quantifier elimination for semi-algebraic sets \cite{mac-76}, one using resolution of singularities, and one where he introduced  cell decomposition techniques.   We refer to \cite{denef-2000} for a comparison of both approaches.  

A first step towards a proof is 
the realization that the terms of a Poincar\'e series can be connected to the measure of certain semi-algebraic sets, and hence to $p$-adic integrals. For instance, one can check that
\[ N_m = p^{nm} \cdot \mu(\{x\in \Z_p^n \mid \ord f(x) \geqslant m\}),\]
where $\mu$ is the Haar measure normalized such that $\mu(\Z_p) = 1$, and $\ord$ denotes the valuation map $\ord: \Q_p \to \Z\cup \{\infty\}$. 
To prove rationality, one needs to understand how the measure of this family of definable sets depends on $m$. (If the dependence is tame enough, identities like $\sum _{n=0}^{\infty}{x^n} = \frac1{1-x}$ can be used to deduce rationality.)
Hence the focus shifts to the computation of (families of) $p$-adic integrals.

In a $p$-adic integral, the integrand is usually the valuation `$\ord f$' or the $p$-adic norm $|f| := p^{-\ord f}$ of a semi-algebraic function $f$. More generally, one can consider so-called \emph{constructible} functions, i.e. $\Q$-linear combinations of definable functions $\alpha: \Q_p^r \to \Z$ and their induced functions $p^{\alpha}$.
 Note that it is natural here to work in a two-sorted structure $(\Q_p, \Z)$, adding the value group $\Z \cup \{\infty\}$ as a separate sort. The word definable should also be interpreted in that context, using a two-sorted language $\Lringtwo =(\Lring, \Lpres, \ord)$, consisting of the ring language $\Lring$ for the main sort $\Q_p$, the Presburger language $\Lpres = (+,-,<,\equiv_n)$ for the value group sort $\Z \cup \{\infty\}$, and the valuation map $\ord: \Q_p \to \Z \cup \{\infty\}$ to connect the sorts. 
 
 Denef showed in \cite{denef-85} that given a semi-algebraic function $f$,  the integral of $|f|$ equals a constructible function, a result which was later generalized to the sub-analytic setting by Cluckers, Gordon and Halupczok \cite{Clu-Gor-Hal-14}. We will discuss this in more detail in the next section. In section \ref{sec:integration}, we prove that this closure property holds in arbitrary $P$-minimal structures over $p$-adic fields. Corollary \ref{thm:rationalitygeneral} then provides an example of how the rationality of Poincar\'e series can be deduced from this.
\\\\
Let us now discuss the connection with cell decomposition techniques. The general philosophy is to partition a definable set $X$ in somewhat uniform parts, called cells. If $X$ is the domain of a function $f$, an additional goal may be to \emph{prepare} the function, i.e. choosing the partition in such a way that the function $f$, when restricted to each of the cells, has some additional nice properties. 
Cells are generally taken to be sets of the form 
\[\left\{(x,t) \in D \times T \left| \ \ \begin{array}{l} \text{a condition of a fixed form describing } \\ t \text{ in terms of the other variables } x \end{array}  \right\}\right.,\]
where $D$ is a definable set and $T$ is one of the sorts. For instance, when working with semi-algebraic sets, this fixed form is a formula stating that $x$ belongs to an interval-like set: \[ \ord \,a_1(x) \ \square_1  \ \ord (t-c(x)) \ \square_2 \ \ord \,a_2(x) \ \ \ \text{and} \ \ \ t-c(x) \in \lambda P_n,\]
where $\square_i$ may denote $<$ or \emph{no condition}, $\lambda \in K$ and the functions $a_i(x)$ are definable functions $D \to K$. We use the symbol $P_n$ for the set of (non-zero) $n$-th powers. Note that if $\lambda =0$, what we get is just the graph of the function $c(x)$.

What Denef \cite{denef-84, denef-86} showed is that given a semi-algebraic function $f:X\subseteq K^{n+1}\to K$, $X$ can be decomposed into cells such that on each cell $C$, $f_{|C}$ satisfies the following condition:

\[|f(x,t)|=|\lambda(t-c(x))|^{\frac{e}{n}}|h(x)|,\] 
where $h(x)$ and $c(x)$ are definable functions and $e,n$ are integer numbers. The version stated here is a reformulation by Cluckers \cite{clu-2000}, who  also obtained an analogous result  for subanalytic functions in \cite{clu-2003}.

This preparation result is particularly useful for integration purposes. Indeed, if the domain of a function $f: X \to K$ can be partitioned into a finite number of sets $\{ (x,t) \in D_i\times T \mid \phi_i(x,t)\}$ on each of which $f$ has the form prescribed above, then one gets that
\[ \int_{X} |f(x,t)| |dx||dt| =  \sum_{i} \int_{D_i} |h(x)|\left[\int_{\{t\in T\mid \phi_i(x,t)\}} |\lambda(t-c(x))|^{\frac{e}{n}}|dt|\right]|dx|.\]
Iteration of this theorem allows one to give very accurate descriptions of the value of the integral of $f$ and its dependence on possible parameters.
Similar strategies were applied for the subanalytic case, see e.g. \cite{denef-vdd-88, clu-2003}.
\\\\
When Haskell and Macpherson developed the notion of $P$-minimality \cite{has-mac-97}, it was natural to ask how much of the above ideas could be generalized to that setting. 
One of the most notable results so far in this context is Mourgues' cell decomposition theorem \cite{mou-09}. She showed that in a $P$-minimal structure which admits definable Skolem functions, any definable set $A \subseteq K^{r+1}$ can be partitioned in cells of the form
\[\{(x,t) \subseteq S\times K\mid \ord\, a(x) \ \square_1 \ \ord(t-c(x)) \ \square_2\ \ord\, b(x);\ t-c(x) \in \lambda P_n\},\]
where $a,b,c: S\to K$ are definable functions, $\square_i$ are either $<$ or \textit{no condition}, $P_n$ denotes the set of $n$-th powers and $\lambda \in K$. Moreover, she showed that the existence of definable Skolem functions is a necessary condition for the existence of a decomposition using cells of this form. Note that is currently not known whether all $P$-minimal structures admit definable Skolem functions. Work by the second author on reducts of $p$-adically closed fields \cite{lee-2012.1} seems to indicate that this may not be the case, and hence some caution is warranted when making this assumption. 

One way to deal with this uncertainty is to replace $P$-minimality by a (possibly) more restrictive notion, explicitly adding the existence of Skolem function as a condition. An example of this approach is the recent attempt of Darni\`{e}re (see \cite{dar-14} for his preprint), who  suggests a notion of so-called $P$-optimal structures. 
Even with these stronger assumptions, it is still an open problem whether some version of the Denef-Cluckers preparation theorem holds for arbitrary $P$-minimal structures.
%
\\\\
In this paper, we take an alternative approach: the results we present here  do not rely on the existence of definable Skolem functions. Instead,  we decided to shift the emphasis back to possible applications. While our versions of cell decomposition/preparation are certainly weaker than versions known for individual structures, they are still strong enough to prove the theorems that initially motivated their development.  Theorem \ref{thm:integrals} is an illustration of this.

The rest of the paper is organized as follows. In section \ref{sec:celdec}, we explain our results in more detail.  The proof of the cell decomposition and preparation theorems  will be given in section \ref{sec:prep}.
 In section \ref{sec:integration} we show that constructible functions form a class that is stable under integration. This result will be used to derive the rationality of Poincar\'e series. 

Our arguments use the fact that on every model $M$ of Presburger arithmetic, there exists a definable total order $\lhd$ of $M$, such that every definable subset of $M$ has a $\lhd$-minimal element.
We present this result in an appendix, as part of a more general framework. As a corollary, we obtain that Presburger arithmetic has elimination of imaginaries (a result already proven in 
\cite{clu-presb03}).
\\\\
The authors would like to thank Raf Cluckers for for stimulating conversations during the preparation of this paper. 
 
\section{Overview of main results} \label{sec:celdec}
In this section we state the main results of this paper. Proofs will be deferred to later sections.

We  first fix some notations. Let $K$ be a $p$-adically closed field (that is, elementarily equivalent to a $p$-adic field). We use the notation $q_K$ for the number of elements of the residue field $k_K$, $\cO_K$ for the valuation ring of $K$, and $\pi_K$ for a uniformizing element. Write $\acm: K \to (\cO_K/\pi_K^m\cO_K)^{\times} \cup\{0\}$ for the $m$-th angular component map, which can be defined as
\[\acm(x) := \left\{\begin{array}{ll} \tilde{x}\hspace{-5pt} \mod \pi_K^m & \text{if } 0 \neq x = \pi_K^{\ord x}\tilde{x},\\
0& \text{if } x=0.
\end{array}\right.\]
In every expansion of a $p$-adically closed field $K$, such angular component maps exist and can be defined in a unique way, as was shown in Lemma 1.3 of \cite{clu-lee-2011}.
For notational purposes, we will fix a definable set $S \subseteq K^{m_0} \times \Gamma_K^{m_0}$ which we call a \emph{parameter set}. Given a set $X\subseteq S\times K$ and $s\in S$, we write \[X_s:=\{x\in K\mid (s,x)\in X\}\] to denote the fiber over $s$. 
Analogously, for a definable function $f:X\rightarrow \Gamma_K$, we use the notation $f_s(\cdot)$ for the function $f(s,\cdot):X_s\rightarrow \Gamma_K$. Given two sets $A$ and $B$, we write $\Pi_{A}:A\times B\to A$ for the projection onto $A$, and $\Pi_B:A\times B\to B$ for the projection onto $B$. For a positive integer $n\geq 1$, $A^{\leq n}$ denotes $\bigcup_{i=1}^n A^i$. 
\\\\
We will work with a two-sorted version of $P$-minimality, where we consider both the field sort and the value group sort $\Gamma_K \cup \{\infty\}$ to be of equal importance. 
 Let $(K, \Gamma_K;\Lm_2)$ be a two-sorted structure, with language $\Lm_2 = (\Lm, \cL_{Pres}, \ord)$. Here $\cL$, the language for the $K$-sort, is assumed to be an expansion of the ring language $\Lring$. For the value group sort $\Gamma_K\cup\{+\infty\}$, we use the language of Presburger arithmetic $\cL_{Pres} = (+,-,<,\equiv_n)$. The sorts are connected through the valuation map $\ord: K\to \Gamma_K\cup\{+\infty\}$. If the language $\Lm_2$ is clear from the context, we will just write $(K, \Gamma_K)$. By a definable set we mean definable with parameters. 
\begin{defn}
A two-sorted structure $(K, \Gamma_K; \Lm_2)$ with $\Lm_2 = (\Lm, \Lpres, \ord)$ and $\Lring \subseteq \Lm$ is said to be \emph{$P$-minimal} if the underlying structure $(K, \Lm)$ is $P$-minimal, that is, for every $(K',\Lm)$ elementarily equivalent to $(K,\Lm)$, the $\Lm$-definable subsets of $K'$ are $\Lring$-definable.
\end{defn}
This definition is motivated by the following observation, which was based on Wagner's results on definable functions in one variable on certain ordered abelian groups \cite{point-wag-2000}.
\begin{thm}[Cluckers\cite{clu-presb03}, Lemma 2 and Theorem 6]\label{thm:semialgpres}
Let $(K, \Lm)$ be a $P$-minimal field.
\item For any $\Lm$-definable set $X \subseteq (K^{\times})^m$, the set
\[\ord(X):=\{(\ord\, x_1,  \ldots , \ord\, x_m) \in \Gamma_K^m\mid (x_1, \ldots, x_m) \in X\}\]
is $\Lm_{\text{Pres}}$-definable.
\item Let $S \subseteq \Gamma_K^m$ be a Presburger-definable set. Then the set
\[\ord^{-1}(S):= \{ (x_1, \ldots\ , x_m) \in (K^{\times})^m \mid \ord\, x \in S\}\]
is $\Lring$-definable.
\end{thm}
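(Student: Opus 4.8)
The plan is to prove the two assertions by quite different means: the second is a formal consequence of quantifier elimination for Presburger arithmetic together with Macintyre's description of $\Lring$-definable subsets of $K$, whereas the first is the substantial statement, where $P$-minimality enters through Wagner's analysis of one-variable definable functions on the value group.

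For the second assertion, I would first invoke Presburger quantifier elimination to write the $\Lpres$-definable set $S \subseteq \Gamma_K^m$ as a finite Boolean combination of atomic sets of the forms $\{\bar\gamma : \sum_i a_i\gamma_i \ \square\ \delta\}$ and $\{\bar\gamma : \sum_i a_i\gamma_i \equiv r \pmod n\}$, where $a_i, r \in \Z$, $n \in \Nz$, $\square \in \{<,\le,=\}$ and $\delta \in \Gamma_K$ is a parameter. Since $\ord^{-1}$ commutes with Boolean operations, it suffices to pull back a single atomic set, and since $\sum_i a_i\ord x_i = \ord(\prod_i x_i^{a_i})$ on $(K^\times)^m$, this reduces to showing that the one-variable sets $\{y \in K^\times : \ord y \ \square\ \delta\}$ and $\{y \in K^\times : \ord y \equiv r \pmod n\}$ are $\Lring$-definable. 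For the former I would pick $d \in K^\times$ with $\ord d = \delta$ (possible as $\ord$ is onto $\Gamma_K$) and rewrite the condition as a condition on the valuation of $y/d$, i.e.\ membership of $y/d$ or $d/y$ in $\cO_K$ or $\cO_K^\times$; for the latter I would use that $\{y : \ord y \equiv r \pmod n\} = \pi_K^r \cdot \cO_K^\times \cdot P_n$. All of these are $\Lring$-definable by Macintyre, so $\ord^{-1}(S)$ is as well.

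For the first assertion I would reformulate the claim as the statement that the structure induced on $\Gamma_K$ by all $\Lm$-definable sets (via coordinatewise $\ord$) is exactly the Presburger structure; one inclusion is the second assertion, and the content is that no genuinely new structure appears. The engine is $P$-minimality at the level of a single field variable: for a definable set fibered over the remaining coordinates, every fiber in $K^\times$ is semialgebraic, so by semialgebraic cell decomposition its $\ord$-image is built from the valuations of the cells' center and boundary functions together with a congruence condition. The essential new information about $\Gamma_K$ therefore comes from the induced subsets and functions of a single value-group variable, and this one-variable induced structure on the $\Z$-group $\Gamma_K$ is Presburger by Wagner's theorem (Lemma 2 of the cited work): definable one-variable functions are piecewise linear with rational slopes, and definable one-variable sets are finite unions of arithmetic progressions and points.

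The main obstacle is upgrading this one-variable control to arbitrary arity, and this is precisely the content of Cluckers' Theorem 6. A naive induction on the number of field variables is circular: describing how the valuation of a cell's boundary function $a(\bar x)$ depends on $\ord\bar x$ amounts to showing that $\{(\ord\bar x, \ord a(\bar x))\}$ is Presburger, which is an instance, in the same number of variables, of the statement being proved. The argument must therefore be genuinely multivariable: one decomposes $X$ into semialgebraic cells uniformly over the parameter coordinates (using that $P$-minimality yields, via compactness over the elementary class, uniform semialgebraicity of the fibers) and then shows directly that the joint value-group data of these cells is Presburger in the remaining value-group variables, with the existential projection over the field parameters staying within Presburger by quantifier elimination. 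Propagating Wagner's one-variable input through this multivariable assembly, without resorting to the circular induction, is where I expect the real difficulty to lie.
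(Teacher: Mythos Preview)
The paper does not give its own proof of this theorem: it is stated with attribution to Cluckers (Lemma~2 and Theorem~6 of the cited paper), with a remark that the result rests on Wagner's analysis of one-variable definable functions on ordered abelian groups. So there is no proof in the paper to compare against; your proposal is already more than the paper provides.

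That said, your sketch is accurate as far as it goes. The argument for the second assertion via Presburger quantifier elimination and rewriting linear valuation conditions and congruences in terms of $\cO_K$, $\cO_K^\times$, and the $P_n$ predicates is correct and standard. For the first assertion you correctly isolate the real content as lifting the one-variable Presburger control (Wagner/Cluckers Lemma~2) to arbitrary arity, and you are right that this is exactly Cluckers' Theorem~6. Your diagnosis of the circularity in a naive inductive approach is also apt: the actual proof in Cluckers' paper avoids it by proving a stronger joint statement (cell decomposition with simultaneous preparation of the relevant functions) by induction, so that at each stage one already has enough information about the boundary and center functions to conclude. Since you ultimately defer to Cluckers' Theorem~6, as the paper does, there is no gap relative to the paper; but if you wanted a self-contained proof, the missing idea is precisely this strengthened inductive package.
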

\noindent This theorem implies that, given a (mono-sorted) $P$-minimal structure $(K, \Lm)$, the valuation map $\ord: K \to \Gamma_K \cup \{\infty\}$ induces a two-sorted structure $(K, \Gamma_K)$ where every definable subset of $\Gamma_K^m$ is $\Lpres$-definable, and hence it is natural to take $\Lpres$  as the language for the value group sort.
 
Note that a two-sorted $P$-minimal structure $(K,\Gamma_K)$ cannot have definable Skolem functions since any definable section of $\ord$ contradicts the assumption of $P$-minimality. 
\\\\
We will now explain our notion of cells and cell decomposition.
 We distinguish the following two kinds of cells:

\begin{defn}[Cells]\label{def:cell} Let $(K,\Gamma_K)$ be an $\cL_2$-structure.  
\begin{itemize}[leftmargin=*]
\item A subset $C\subseteq S\times K$ is a \emph{$K$-cell} if it is of the form 
\[C = \left\{(s,t) \in D\times K \ \left| \ \begin{array}{l} \alpha(s)\ \square_1 \ \ord(t-c(s)) \ \square_2 \ \beta(s),\\ \ord(t-c(s)) \equiv k\mod n,\\ \acm(t-c(s)) = \xi \end{array} \right\}\right.,\]
where $D$, the \emph{base} of the cell, is a definable subset of $S$, $c$ is a definable function $c:D\to K$, $\alpha, \beta$ are $\Lm_2$-definable functions $D\to\Gamma_K$, $k, n, m \in \NN$, $\xi \in \acm(K)$ and the symbols $\square_i$ may denote $<$ or \emph{no condition}. 
\\If $\acm(t-c(s)) =0$ (and hence $t=c(s)$), one should ignore the first two conditions. 
\item A subset $B\subseteq S\times \Gamma_K$ is a \emph{$\Gamma$-cell} if it is of the form
\[B= \left\{(s,\gamma)\in D\times \Gamma_K \left|\begin{array}{l} \alpha(s)\ \square_1 \ \gamma \ \square_2 \ \beta(s), \\
\gamma \equiv k\mod n \end{array}\right\}\right.,\]
where $D$, the \emph{base} of the cell, is a definable subset of $S$, $\alpha, \beta$ are definable functions $D\to\Gamma_k$, $k, n\in \NN$ and the squares $\square_i$ may denote $<$ or \emph{no condition}.  
\end{itemize}
\end{defn}
\noindent To each cell, one can associate a tuple $(\square,k,n,m,\xi)$, respectively $(\square,k,n)$, where $\square = (\square_1, \square_2) \in \{\emptyset, <\}^{2}$, $\xi\in \acm(K)$, and $(k,n,m)$ is a triple of non-negative integers such that $k<n$. These tuples will be referred to as the \emph{type} of the respective cells. We denote by $P_K$ (resp. $P_\Gamma$) the set of all possible types of $K$-cells (resp. $\Gamma$-cells).
\\
\\\
We obtain cell decomposition results for definable sets both of the form $X\subseteq S\times K$ and $X\subseteq S\times \Gamma_K$. Let us first consider the case where the last variable belongs to the $\Gamma_K$-sort.
When $X$ is a set $X \subseteq S \times \Gamma_K$, we obtain a partition into a finite union of $\Gamma$-cells. Moreover, for functions $f: X \subseteq S\times \Gamma_K \to \Gamma_K$, we describe explicitly how, on each cell in the decomposition of $X$, the value of $f$ depends on the last variable. 

\begin{prop}[Function preparation] \label{prop:prep}
Let $f: X \subseteq S\times \Gamma_K \to \Gamma_K$ be definable in a $P$-minimal structure $(K,\Gamma_K)$. There exists a finite partition of $X$ in $\Gamma$-cells $C$, such that on each cell $C$  with type $(\delta, k, n)$, the function $f$ has the form
\[f(x,\gamma) = a\left(\frac{\gamma-k}{n}\right) + \delta(x),\]
where $a \in \ZZ,  n,k \in \NN$ and $\delta$ is a definable function $S\to \Gamma_K$. 
\end{prop}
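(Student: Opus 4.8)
The plan is to deduce the statement from the classical Presburger preparation theorem for one-variable functions, made uniform over the parameter set $S$. The enabling observation is Theorem~\ref{thm:semialgpres}: although $S$ carries field coordinates, the value group sort carries \emph{exactly} the Presburger structure. Concretely, writing $G_f:=\{(s,\gamma,\rho)\in S\times\Gamma_K\times\Gamma_K \mid (s,\gamma)\in X,\ f(s,\gamma)=\rho\}$ for the graph, each fibre $(G_f)_s\subseteq\Gamma_K^2$ is an $\Lm_2$-definable subset of the value group, hence $\Lpres$-definable with parameters in $\Gamma_K$. Thus, once $s$ is fixed, the entire geometry of $f$ in the remaining variable is governed by Presburger arithmetic, and the field coordinates of $s$ survive only through the $\Gamma_K$-parameters they produce. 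This is what lets me import results from Presburger arithmetic even though the field sort admits no Skolem functions.

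First I would run the argument fibrewise. For each $s$ the function $f_s$ is piecewise affine, so $X_s$ partitions into finitely many arithmetic-progression intervals --- that is, $\Gamma$-cells --- on each of which $f_s(\gamma)=a\tfrac{\gamma-k}{n}+c$ with $a\in\ZZ$, $k,n\in\NN$ and $c\in\Gamma_K$. To turn this into a single definable partition I would next show that only finitely many cell types $(\square,k,n)\in P_\Gamma$ and finitely many slopes $a$ occur as $s$ ranges over $S$; this is a standard compactness argument, using the uniform bounds furnished by quantifier elimination in Presburger arithmetic applied to the definable family $\{f_s\}_{s\in S}$. For each admissible pair of a type and a slope, the set of $(s,\gamma)$ lying in a cell of that type on which $f$ takes the prescribed affine form is again $\Lm_2$-definable, and collecting these over the finitely many data yields the required finite partition of $X$.

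It then remains to present the cell data as honest definable functions of $s$, and this is where I expect the main obstacle. The intercept should be $\delta(s):=f(s,\gamma_0(s))-a\tfrac{\gamma_0(s)-k}{n}$ for a base point $\gamma_0(s)$ of the relevant cell, and the bounds $\alpha(s),\beta(s)$ should be the endpoints of the maximal progression on which the affine identity persists; but each of these is a priori only defined up to a choice, and no Skolem function is available in the field sort. To resolve this I would invoke the definable total order $\lhd$ on the value group established in the appendix, for which every definable subset of $\Gamma_K$ has a $\lhd$-least element. Taking $\gamma_0(s)$ and the endpoints to be the corresponding $\lhd$-minimal witnesses makes $\delta,\alpha,\beta$ into genuine $\Lm_2$-definable functions, so that each piece is a $\Gamma$-cell of type $(\square,k,n)$ with $f$ of the stated form. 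The technical heart of the proof is thus the reconciliation of fibrewise Presburger preparation with definable selection across a parameter set containing field coordinates: Theorem~\ref{thm:semialgpres} confines the geometry to Presburger arithmetic, and the $\lhd$-choice on $\Gamma_K$ supplies the selection that Skolem functions in the field sort cannot.
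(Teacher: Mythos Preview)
Your outline captures the right three ingredients --- fibrewise Presburger preparation, compactness to bound the data, and $\lhd$-selection on $\Gamma_K$ --- and this is exactly the paper's strategy. However, your step~3 contains a genuine gap. The sets ``$(s,\gamma)$ lying in a cell of a given type on which $f$ has a given slope'' are definable, but they are \emph{not} pairwise disjoint: the same $(s,\gamma)$ will typically belong to many such sets (e.g.\ if $f_s(\gamma)=\gamma$ on all of $\Gamma_K$, then every $\gamma$ lies in a cell of type $(\emptyset,\emptyset,0,1)$ with slope $1$, and simultaneously in one of type $(\emptyset,\emptyset,k,n)$ with slope $n$, for every $n$ and every $k<n$). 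So ``collecting these'' does not produce a partition. A related issue is that the fibrewise decomposition of $X_s$ may require \emph{several} cells of the same type (with different endpoints), so compactness must bound the total number of cells, not merely the number of distinct types and slopes that occur.

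The paper resolves both points by making the choice of decomposition canonical \emph{before} selecting the parameters. After compactness produces a bound $N$, one forms the finite set $W_N$ of all possible ``templates'' $(J,a)$, where $J$ is a subset of the disjoint union of $N$ copies of the bounded type set $P_{\Gamma,N}$ (so repeated types are allowed) and $a$ records a bounded slope for each element of $J$. Fixing an arbitrary total order $\lessdot$ on the finite set $W_N$, one sends each $s$ to the $\lessdot$-least template for which $X_s$ admits a decomposition of that shape; this is a definable map and already partitions $S$. Only then does one invoke $\lhd$-selection on $\Gamma_K$ to pick the $\lhd$-least tuple of endpoints and intercepts, exactly as in your step~4. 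In short, you correctly identified that definable choice on $\Gamma_K$ is the engine, but it must be applied twice: once (trivially, via any order on a finite set) to select the combinatorial shape of the decomposition, and once (via $\lhd$) to select the $\Gamma_K$-valued parameters.
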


If $X$ is a subset of  $S \times K$, the statement of our cell decomposition result is more subtle. The main difference between classical cell decomposition and $K$-cell decomposition arises at the level of centers. In the classical definition, the centers appear as the images of definable functions from the parameter set $S$. Instead, a $K$-cell decomposition provides a partition of $X$ into sets $X_i$, which are essentially a finite union of cells, together with a definable $\Sigma_i$ containing all their possible tuples of centers.  
This type of decomposition is sufficiently strong for the computation of integrals: the Haar measure is translation invariant, and hence the centers are not of great importance here. 


\begin{def-theorem}[$K$-cell decomposition]\label{thm:celdec-trans}
Let $(K,\Gamma_K)$ be a $P$-minimal structure, and $X \subseteq S \times K$ be a definable set. There exists a finite partition of $X$ in sets $X_i \subseteq S_i \times K$. On each part $X_i$, there is an integer $r$ and $r$ associated $K$-cells $C_j$ of the form
\[ C_j := \left\{(s,t) \in S_i \times K \left| \begin{array}{l}\alpha_j(s) \ \square_{1,j} \ \ord\,t \ \square_{2,j} \ \beta_j(s) \ \wedge \\\ord\,t \equiv k_j \mod n \ \wedge \\\ac_m(t) = \xi_j\end{array}\right\}\right.,\]
where $\alpha_j, \beta_j: S_i \to \Gamma_K$ are definable functions and $(\square_j, k_j, n, m, \xi_j) \in P_K$.
To each $X_i$, we associate a definable set $\Sigma_i\subseteq S_i \times K^r$, which has the following property. To any function
\[ \sigma: S_i \to K^r : s \mapsto (\sigma_1(s), \ldots, \sigma_r(s)), \]
whose graph is contained in $\Sigma_i$, we can associate a (bijective) translation $T_\sigma: \sqcup_j C_j \to X_i$, defined by 
\[T_{\sigma}(s,t) = (s, t-\sigma_j(s)) \qquad \text{ for all } (s,t) \in C_j.\] 
The tuple $\{(X_i )_{i},(\Sigma_i)_{i}, (C_{\delta_{ij}})_{i,j}\}$,  will be called a $K$-cell decomposition of $X$.
\end{def-theorem} 
\noindent Notice that the cells $C_j$ are not necessarily disjoint (in fact, some of them may even coincide.) What we obtain is a family of bijective translations $T_{\sigma}$ between the disjoint union $\sqcup_j\, C_j$ and one of the parts $X_i$. Also note that while the sets $\Sigma_i$ are definable, we cannot assure that any of the individual curves $\sigma$ contained in it will be definable. (If a definable $\sigma$ exists and $X$ only consists of $K$-variables, a cell decomposition with the functions $\sigma(x)$ as centers will be very similar to what Mourgues obtained.)
\\\\
In the second part of the paper (section \ref{sec:integration}), we discuss applications of the preparation and cell decomposition theorems. We will restrict our attention to the case where $K$ is a $p$-adic field. The results in other sections are valid for arbitrary $p$-adically closed fields.

 Inspired by his rationality results, Denef decided to introduce the class of constructible functions: 

\begin{defn}\label{def:constructible}
Let $X$ be an $\cL_2$-definable set. Write $\AA_{q_K}$ for the ring \[\AA_{q_K}:=\ZZ\left[q_K, q_K^{-1}, \left(\frac1{1-q_K^{-i}}\right)_{i\in \NN, i>0}\right].\]
We say that a function $f:X\to \QQ$ is \emph{$\cL_2$-constructible} if it is contained in the $\AA_{q_K}$-algebra generated by functions of the forms
\[\alpha:X\to \ZZ \qquad \text{and} \qquad
X\to \ZZ:x\mapsto q_K^{\beta(x)},\]
where $\alpha$ and $\beta$ are $\cL_2$-definable and $\ZZ$-valued.  
\end{defn}

When $\cL$ is $\cL_{\rm ring}$, the subanalytic language $\Lm_{an}$ on $K$ (see \cite{denef-vdd-88} for a definition),  or some intermediary languages as in \cite{CLip}, the class of $\Lm_2$-constructible functions is known to be stable under integration (see \cite{denef-2000}, \cite{clu-2003}, and \cite{Clu-Gor-Hal-14} for the most convenient dealing with integrability conditions). 
We  show that (see Theorem \ref{thm:integrals}), whenever $(K, \Z)$ is a $P$-minimal structure, the class of $\Lm_2$-constructible functions is stable under integration : 

\begin{thm*}
Let $K$ be a $p$-adic field,  $(K, \Z)$  a $P$-minimal structure, and $f: X \subseteq S\times K^m \to \AA_{q_K}$ an  $\Lm_2$-constructible function such that $f(s, \cdot)$ is measurable and integrable on $Y_s$ for all $s\in S$. There exists a constructible function $g: S \to \AA_{q_K}$, such that
\[g(s) = \int_{X_s} f(s,x)|dx|,\]
for all $s\in S$.
\end{thm*}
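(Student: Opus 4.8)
The plan is to prove the statement by induction on the number $m$ of field variables over which we integrate, the inductive step being an application of Fubini's theorem (valid for the Haar measure on $K^m$): writing $X \subseteq (S \times K^{m-1}) \times K$ and viewing $S' := S \times K^{m-1}$ as a new parameter set, one integrates out the last variable to obtain, by the case $m=1$, a constructible function on $S'$, and then applies the induction hypothesis. Thus everything reduces to the base case $m = 1$, which I now describe. Let $f : X \subseteq S \times K \to \AA_{q_K}$ be constructible. By Definition \ref{def:constructible}, $f$ is an $\AA_{q_K}$-combination of products of definable $\Z$-valued functions $\alpha(s,t)$ and of functions $q_K^{\beta(s,t)}$ with $\beta$ definable and $\Z$-valued.

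First I would apply the $K$-cell decomposition of Theorem \ref{thm:celdec-trans} to $X$, refining it so that simultaneously each of the finitely many functions $\alpha, \beta$ entering $f$ is prepared; this yields finitely many parts $X_i$, each covered (through a translation $T_\sigma$ with $\sigma$ a section of the definable set $\Sigma_i$) by $K$-cells $C_j$ centered at $0$, on which $\ac_m(t) = \xi_j$ is constant and $\ord\, t \equiv k_j \bmod n$. Since the Haar measure is translation invariant, for any admissible $\sigma$
\[\int_{(X_i)_s} f(s,u)\,|du| = \sum_j \int_{(C_j)_s} f(s, t - \sigma_j(s))\,|dt|,\]
so it suffices to compute each summand. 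The preparation ensures that, on $C_j$, the integrand $f(s, t - \sigma_j(s))$ depends on $t$ only through $\gamma := \ord\, t$ (the angular component being frozen at $\xi_j$); crucially this dependence is insensitive to the particular choice of $\sigma$, so it defines a genuine function of $(s, \gamma)$ even though the individual centers need not be definable.

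Next I would carry out the field integral as a value-group sum. The set $\{t \in K : \ord\, t = \gamma,\ \ac_m(t) = \xi_j\}$ has Haar measure $q_K^{-\gamma}$ times a constant in $\AA_{q_K}$ depending only on the cell type, so
\[\int_{(C_j)_s} f(s, t - \sigma_j(s))\,|dt| = \sum_{\gamma} c_j\, q_K^{-\gamma}\, F_j(s, \gamma),\]
where $\gamma$ ranges over the Presburger set $\{\gamma : \alpha_j(s)\ \square_{1,j}\ \gamma\ \square_{2,j}\ \beta_j(s),\ \gamma \equiv k_j \bmod n\}$ and $F_j(s, \gamma)$ is the induced value of the integrand as a constructible function of $\gamma$. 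Applying the function preparation of Proposition \ref{prop:prep} to the $\Z$-valued functions underlying $F_j$ expresses $F_j(s, \gamma)$ as an $\AA_{q_K}$-combination of terms $\big(\tfrac{\gamma - k_j}{n}\big)^{b} q_K^{a\gamma}$ with $\Z$-valued coefficients $\delta(s)$. Each resulting sum over $\gamma$ is of the form $\sum_\gamma \gamma^{b} q_K^{e\gamma}$ over an arithmetic progression with endpoints the definable $\Z$-valued functions $\alpha_j(s), \beta_j(s)$; by the standard geometric-type summation identities these evaluate to constructible functions of $s$, the convergence of the (possibly infinite) sums being guaranteed by the integrability hypothesis and their limits lying in $\AA_{q_K}$ precisely because $\AA_{q_K}$ contains the elements $\tfrac{1}{1 - q_K^{-i}}$. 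Summing the finitely many contributions over $j$ and over $i$ yields the desired constructible $g(s)$.

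The main obstacle is the preparation step for the constructible integrand: showing that after $K$-cell decomposition each $\Z$-valued definable function $\alpha(s,t)$ (and $\beta(s,t)$) restricted to a cell factors through the valuation $\ord(t - \text{center})$, so that the field integral genuinely collapses to a value-group sum to which Proposition \ref{prop:prep} applies. The difficulty is compounded by the absence of definable Skolem functions: the centers are available only through the definable family $\Sigma_i$ and the associated translations $T_\sigma$, not as individual definable curves, so one must verify that the prepared form of $f$ on the centered cells $C_j$ is independent of the chosen section $\sigma$. Once this compatibility is established, the measure computation and the Presburger summation are routine, and Fubini propagates the conclusion to all $m$.
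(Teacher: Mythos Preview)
Your induction on $m$ via Fubini is fine, and so is the endgame (geometric-type sums over an arithmetic progression). The genuine gap is the step you yourself flag as ``the main obstacle'': you assume that after applying Theorem~\ref{thm:celdec-trans} one can \emph{prepare} each definable $\Z$-valued function $\alpha,\beta:S\times K\to\Z$ so that on a $K$-cell it factors through $\ord(t-\text{center})$. No such result is available in this paper, and in fact the introduction explicitly says that a Denef--Cluckers type preparation theorem for arbitrary $P$-minimal structures is an open problem. Theorem~\ref{thm:celdec-trans} only decomposes \emph{sets}; the preparation result Proposition~\ref{prop:prep} only handles functions whose last variable lies in $\Gamma_K$, not in $K$. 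So the passage ``refining it so that simultaneously each of the finitely many functions $\alpha,\beta$ entering $f$ is prepared'' is exactly the missing ingredient, and without it the field integral does not collapse to a value-group sum in the way you describe. The center-independence issue you raise is a further symptom of the same problem.

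The paper avoids this entirely by a graph trick that you should adopt. Rather than preparing the $f_{ij}$, introduce auxiliary $\Z$-variables $\gamma_{ij}$ and form the definable set $G=\{(s,\gamma,x):\gamma_{ij}=f_{ij}(s,x)\}$. Then
\[
\int_{X_s} f(s,x)\,|dx|=\sum_{\gamma\in D_s}\Big(\sum_i a_i\,q_K^{\gamma_{i0}}\prod_j\gamma_{ij}\Big)\cdot\mu(G_{s,\gamma}),
\]
and now only the \emph{measure} of the fibers $G_{s,\gamma}\subseteq K$ is needed. This requires nothing more than the set-level $K$-cell decomposition (Theorem~\ref{thm:celdec-trans}), which yields Proposition~\ref{prop:constructible measure}: $\mu(G_{s,\gamma})$ is a constructible function of $(s,\gamma)$. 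The remaining sum over $\gamma$ is then handled by Proposition~\ref{prop:int2} (which is where Proposition~\ref{prop:prep} and the parametric rectilinearization actually enter). In short: push all the ``preparation'' work to the $\Gamma_K$-side, where it is available, and on the $K$-side use only the cell decomposition of sets to compute Haar measures.
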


We also extend the rationality results known so far only for the semi-algebraic \cite{denef-84} and subanalytic setting \cite{denef-vdd-88} (and thus also for any sublanguage), obtaining the following:

\begin{thm*}\label{thm:rationality}
Suppose that $(K,\Z)$ is $P$-minimal. Let $X$ be a definable subset of $\cO_K^n\times \NN$, and let $a_n$ be the Haar measure of $X_n:=\{x\in \cO_K^n\mid (x,n)\in X\}$ for each $n\geq 0$. Then the series $\sum_{i\geq 0} a_i T^i$ is rational. 
\end{thm*}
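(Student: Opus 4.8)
The plan is to reduce the statement to the integration theorem and then run Denef's generating-function argument in this setting. After reordering coordinates, view $X$ as a definable subset of $S\times \cO_K^n$ with parameter set $S:=\NN\subseteq\Gamma_K$, and let $f:X\to\AA_{q_K}$ be the constant function $f\equiv 1$, which is trivially $\Lm_2$-constructible. Since each fiber $X_m\subseteq\cO_K^n$ is bounded, $f(m,\cdot)$ is measurable and integrable on $X_m$, with $\int_{X_m}1\,|dx|=\mu(X_m)=a_m$. Theorem~\ref{thm:integrals} then produces a constructible function $g:\NN\to\AA_{q_K}$ with $g(m)=a_m$ for all $m\geq 0$. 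The whole problem is thus reduced to understanding the shape of a constructible function defined on a subset of the value group sort.

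Next I would unwind the definition of constructibility on $\NN$. By definition $g$ lies in the $\AA_{q_K}$-algebra generated by finitely many $\ZZ$-valued definable functions $\alpha_i:\NN\to\ZZ$ and exponentials $m\mapsto q_K^{\beta_j(m)}$ with $\beta_j:\NN\to\ZZ$ definable. Since $(K,\Gamma_K)$ is $P$-minimal, every definable function on $\Gamma_K$ is Presburger-definable; applying Proposition~\ref{prop:prep} to each $\alpha_i$ and $\beta_j$ and passing to a common refinement, I obtain a finite partition of $\NN$ into $\Gamma$-cells (arithmetic progressions $m\equiv k \bmod n$ intersected with an interval) on each of which every $\alpha_i$ and every $\beta_j$ is affine-linear in $m$. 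Consequently, on each such cell $g(m)$ is a finite $\AA_{q_K}$-combination of terms $P(m)\rho^m$, where $P\in\QQ[m]$ is a polynomial (a product of the affine $\alpha_i$) and $\rho=q_K^{d}$ for some $d\in\ZZ$ (coming from the product of the exponentials $q_K^{\beta_j(m)}=q_K^{e_j}(q_K^{d_j})^m$).

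Finally I would sum the generating series cell by cell. On an unbounded progression $\{m\geq m_0 : m\equiv k \bmod n\}$, the substitution $m=k+n\ell$ turns $\sum_m P(m)\rho^m T^m$ into $(\rho T)^k\sum_{\ell\geq\ell_0}Q(\ell)\,z^\ell$ with $z=(\rho T)^n$ and $Q(\ell)=P(k+n\ell)$ a polynomial; the latter is a rational function of $z$, hence of $T$, by the classical identities $\sum_{\ell\geq 0}\binom{\ell+j}{j}z^\ell=(1-z)^{-(j+1)}$. Bounded cells contribute mere polynomials in $T$. Summing the finitely many rational contributions shows that $\sum_{i\geq 0}a_iT^i$ is rational.

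Given Theorem~\ref{thm:integrals}, no step here is genuinely hard, since the real content of the result is packaged into the integration theorem. The one point demanding care is the passage from ``$g$ constructible on $\NN$'' to the explicit form $\sum_\nu c_\nu P_\nu(m)\rho_\nu^m$: this relies on the fact that definable functions into the value group are Presburger (hence piecewise affine) and on choosing a single partition into $\Gamma$-cells that simultaneously prepares all the $\alpha_i$ and $\beta_j$. Once this form is in hand, rationality is a formal consequence of the geometric-series identities.
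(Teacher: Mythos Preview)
Your proposal is correct and follows essentially the same route as the paper: apply Theorem~\ref{thm:integrals} to the constant function $1$ to obtain a constructible $g:\NN\to\AA_{q_K}$ with $g(m)=a_m$, then observe that the definable ingredients of $g$ live on the value-group sort and are therefore Presburger-definable, so the rationality reduces to the semi-algebraic case. The only difference is cosmetic: where the paper simply invokes Denef's rationality results for $\Lringtwo$-constructible functions as a black box, you carry out that final step explicitly via Proposition~\ref{prop:prep} and the geometric-series identities.
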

\noindent Here we normalize the Haar measure on $K^n$ so that $\cO_K^n$ has measure $1$. For a more precise statement we refer to  Corollary \ref{thm:rationalitygeneral}.  

\

\section{Cell decomposition and function preparation} \label{sec:prep}

In this section we will give a proof of the cell decomposition and preparation theorems. For the comfort of the reader, we will restate (an abbreviated version of) the theorems. To ease notation, we will assume that for any definable set $X \subseteq S \times K$, the projection onto the parameter set $S$ is surjective, replacing $S$ by $\Pi_S(X)$ if necessary. The following notation will also be used in the proofs of this section.
\begin{defn}\label{def:cellcondition} A \emph{$K$-cell condition} is a formula $C_{\delta}(x,y,\alpha,\beta;s)$ of the form
 \begin{equation*}
C_\delta(x,y,\alpha,\beta;s):= \left(\begin{array}{l} \alpha(s)\ \square_1 \ \ord(x-y) \ \square_2 \ \beta(s) \ \wedge\\ \ord(x-y) \equiv k\mod n \ \wedge\\ \ac_{m}(x-y) = \xi \end{array} \right),
\end{equation*}
where $(\square,k,n,m,\xi)=\delta \in P_K$, and $\alpha, \beta$ are definable functions $S\to \Gamma_K$. When no $s$ appears, $\alpha,\beta$ are just elements of $\Gamma_K$. 
\end{defn}



%


\begin{theorem*}[Theorem-Definition \ref{thm:celdec-trans}]
Let $X \subseteq S \times K$ be a set definable in a $P$-minimal structure $(K, \Gamma_K)$. There exists a $K$-cell decomposition $\{(X_i )_{i},(\Sigma_i)_{i}, (C_{\delta_{ij}})_{i,j}\}$ of $X$. 
\end{theorem*}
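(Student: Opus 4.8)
The plan is to induct on the complexity of the defining formula for $X$, reducing to the analysis of a single definable function in one $K$-variable, and then to exploit $P$-minimality together with Theorem \ref{thm:semialgpres} to transfer the combinatorial work to the value group, where Presburger cell decomposition is available. The key structural idea is that the obstruction to a classical (Mourgues-style) cell decomposition is purely the absence of definable centers; since the Haar measure is translation invariant, we should not try to produce definable centers, but rather produce a \emph{definable family} $\Sigma_i$ of admissible center-tuples and show that for every section $\sigma$ through $\Sigma_i$ the translate $T_\sigma$ is a bijection onto $X_i$.

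First I would set up the base case. Fix $s$ and consider the fiber $X_s \subseteq K$. By $P$-minimality (applied fiberwise, and uniformly using Theorem \ref{thm:semialgpres}), each $X_s$ is a finite boolean combination of balls and points, so it is a finite union of Swiss-cheese-like pieces whose boundaries are controlled by finitely many ``center'' points. The crucial observation is that the \emph{number} $r$ of centers and the \emph{types} $\delta$ of the resulting cells can be taken uniform in $s$ over a definable partition of $S$; this uniformity is exactly what lets us assemble the fibered data into a single definable $\Sigma_i \subseteq S_i \times K^r$ recording all admissible center-tuples for each $s$. For a fixed admissible center-tuple $\sigma(s) = (\sigma_1(s), \dots, \sigma_r(s))$, translating the $j$-th piece of $X_s$ by $-\sigma_j(s)$ lands it in a cell $C_j$ of the form in the statement (centered at $0$), with the interval conditions $\alpha_j(s)\,\square_{1,j}\,\ord\,t\,\square_{2,j}\,\beta_j(s)$ read off from the valuations of $X_s$ relative to $\sigma_j(s)$, and the congruence and angular-component conditions read off from $\acm$ on $X_s$. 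Here Theorem \ref{thm:semialgpres} guarantees that the functions $\alpha_j, \beta_j : S_i \to \Gamma_K$ and the partition of $S_i$ are genuinely $\Lm_2$-definable, because the value-group data attached to a $K$-definable family is Presburger-definable.

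The main obstacle, and the heart of the argument, is establishing \emph{uniformity} and the bijectivity of $T_\sigma$ \emph{simultaneously for all} sections $\sigma$ whose graph lies in $\Sigma_i$, without ever selecting a single definable $\sigma$ (which $P$-minimality forbids, since a definable section of $\ord$ would contradict $P$-minimality). The resolution is to define $\Sigma_i$ not as the set of all center-tuples but as the set of tuples $(s, u_1, \dots, u_r)$ for which the translated pieces $X_s - u_j$ each \emph{exactly} realize the prescribed cell $C_{j,s}$ and jointly partition $X_s$; this is a definable condition (it is a conjunction of the cell conditions $C_{\delta_{ij}}(t, u_j, \alpha_j, \beta_j; s)$ together with a ``covers and is disjoint'' clause), and by construction every section through it yields a bijective $T_\sigma$ by fiat. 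What must then be checked is that $\Sigma_i$ is nonempty over every $s \in S_i$ (so the decomposition is meaningful) and that a common choice of types $\delta_{ij}$ works across the whole of $S_i$; both follow from the fiberwise $P$-minimal structure theory once $S_i$ is refined finitely. The induction on the number of $K$-variables in $X$ then proceeds by treating the last coordinate via the base case, pushing the remaining coordinates into an enlarged parameter set, and invoking the $\Gamma$-side preparation (Proposition \ref{prop:prep}) to handle the value-group bookkeeping generated along the way.

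I expect the delicate points to be the \emph{uniform bound} on $r$ and on the finite set of types (ensuring that the naive fiberwise partition does not require unboundedly many cells as $s$ varies) and the verification that ``exactly realizes a cell of type $\delta$'' and ``partitions $X_s$'' are both first-order expressible in $\Lm_2$, so that $\Sigma_i$ is definable. Granting these, the bijectivity and translation-invariance properties claimed in Theorem-Definition \ref{thm:celdec-trans} are immediate from the definition of $\Sigma_i$, and the finiteness of the partition $\{X_i\}$ comes from the finitely many possible combinatorial configurations (number of centers, types, and their relative valuation ordering) that can occur.
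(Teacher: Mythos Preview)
Your proposal captures the overall architecture correctly --- work fiberwise, package the admissible center-tuples into a definable $\Sigma_i$, and observe that bijectivity of $T_\sigma$ is then automatic --- and this is indeed what the paper does. However, there is a genuine gap at the step where you produce the definable bound functions $\alpha_j,\beta_j:S_i\to\Gamma_K$.

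You write that Theorem \ref{thm:semialgpres} ``guarantees that the functions $\alpha_j,\beta_j$ \ldots\ are genuinely $\Lm_2$-definable'', but that theorem only tells you that the \emph{image} under $\ord$ of a definable set is Presburger-definable; it does not give a definable way to \emph{select}, for each $s$, one specific tuple of endpoints among all the admissible ones. (Indeed, you describe the $\alpha_j,\beta_j$ as ``read off from the valuations of $X_s$ relative to $\sigma_j(s)$'', which would make them depend on the undetermined section $\sigma$ --- contrary to the statement, where the cells $C_j$ are fixed in advance and $\Sigma_i$ is defined afterwards.) This is exactly the Skolem-function obstruction, transported to the value-group side, and nothing in your sketch resolves it. The paper's key device here is a definable total order $\lhd$ on $\Gamma_K$ (constructed in the appendix) with the property that every definable subset of $\Gamma_K^k$ has a $\lhd$-least element; one then \emph{defines} $(\alpha_j,\beta_j)_j$ to be the $\lhd$-minimal tuple for which some center-tuple $y$ realizes the decomposition of $X_s$. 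This selection step is the heart of the argument and is absent from your outline.

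Two smaller points. First, you flag the uniform bound on $r$ and on the set of cell types as ``delicate'' but give no mechanism; the paper obtains it by a straightforward compactness argument (the type asserting that no bound $N$ works is inconsistent). Second, the induction on formula complexity and on the number of $K$-variables is a red herring: the statement already concerns a single extra $K$-variable over an arbitrary parameter set $S$, the paper's proof is not inductive in that sense, and Proposition \ref{prop:prep} plays no role in it.
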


\begin{proof}

Fix a parameter $s \in S$. By the cell decomposition theorem for semi-algebraic sets, see e.g. \cite[theorem 3.3.2]{Clu-Gor-Hal-14}, there exists a finite partition of $X_s$ into $K$-cells 
\begin{equation}\label{eq:1varceldec}
C_s = \left\{t \in K \ \left| \ \begin{array}{l} \alpha_s\ \square_1 \ \ord(t-c_s) \ \square_2 \ \beta_s,\\ \ord(t-c_s) \equiv k_s\mod n_s,\\ \ac_{m_s}(t-c_s) = \xi_{m_s,s} \end{array} \right\}\right.,
\end{equation}
where $\alpha_s, \beta_s \in \Gamma_k, c_s \in K$ and $(\square,k_s, n_s, m_s,\xi_{m_s,s})\in P_K$. Note that the cell decomposition of $X_s$ may contain multiple cells of the same type. 

\begin{claim}\label{claim:compact} 
There is a natural number $N\geq 1$ such that for every $s\in S$, the set $X_s$ can be partioned as a union of at most  $N$ $K$-cells, and for each of these cells we can assure that $n_s,m_s<N$.
\end{claim}

The claim will follow by a standard compactness argument. Recall that $P_K$ consists of elements $\delta=(\square_\delta, k_\delta ,n_\delta, m_\delta, \xi_\delta)$, encoding the type of a $K$-cell. For each positive integer $N$, put 
\begin{align*}
P_{K,N}&:= \{\delta \in P_K \mid n_\delta <N, m_\delta <N\}, \intertext{ and write }
E_{K,N}&:= \bigsqcup_{i=1}^N P_{K,N},
\end{align*}
for the disjoint union of $N$ copies of $P_{K,N}$. Note that $E_{K,N}$ is a finite set. 
For every $J\subseteq E_{K,N}$, fix an enumeration $\{\delta_1,\ldots,\delta_{|J|}\}$ of $J$. Given $y = (y_1,\ldots, y_{|J|})\in K^{|J|}$, and  $\alpha=(\alpha_1,\alpha_2)\in \Gamma_K^{2|J|}$  we will write
\[C_J(y,\alpha):=\left\{\begin{array}{ll}
\displaystyle{\bigcup_{i=1}^{|J|} C_{\delta_i}(K,y_i,\alpha_{1i},\alpha_{2i})} & \text{if the sets } C_{\delta_i}(K,y_i,\alpha_{1i},\alpha_{2i})\vspace{-8pt} \\ &\text{are disjoint,} \medskip 
\\\emptyset & \text{otherwise,} 
\end{array}\right.
\]
making use of $K$-cell conditions $C_{\delta_i}$ as defined in Definition \ref{def:cellcondition}.
Consider the set of formulas 
\[\Sigma(x):=\left.\left\{ \bigwedge_{J\subseteq E_{K,N}} \neg (\exists y\in K^{|J|})(\exists \alpha\in \Gamma_{K}^{2|J|})[X_x=C_J(y,\alpha)] \ \right|\ N\in\mathbb{N}^*\right\}.
\]
Since each $X_s$ can be partitioned in 
semi-algebraic cells as in  \eqref{eq:1varceldec}, $\Sigma(x)$ is inconsistent. Hence, by compactness there exists a finite subset $\Sigma_0(x)$ which is inconsistent. Since $\Sigma_0(x)$ is a finite subset of $\Sigma(x)$, one can find a positive integer $N_0$ such that 
\[\left[\bigwedge_{J\subseteq E_{K,N_0}} \neg (\exists y\in K^{|J|})(\exists \alpha\in \Gamma_{K}^{2|J|})[X_x=C_{J}(y,\alpha)] \right]\models \Sigma_0(x).
\end{equation*}
This implies that there must exist $N>0$ such that for every $s\in S$  
\[(K,\Gamma_K)\models \left[\bigvee_{J\subseteq E_{K,N}} (\exists y\in K^{|J|})(\exists \alpha\in \Gamma_{K}^{2|J|})[X_s=C_{J}(y,\alpha)] \right],
\]
which completes the claim.  

\

Now choose an integer $N$ satisfying the requirements of Claim \ref{claim:compact}. Let $W_N$ denote the power set of $E_{K,N}$. Since $W_N$ is finite, one can put a total ordering $\lessdot$ on it. We will also put an alternative ordering $\lhd$ on the value group $\Gamma_K$, which is defined by :
\begin{equation}\label{eq:order}
x \lhd y \Leftrightarrow
(0 \leqslant x <y) \vee (0< x \leqslant -y) \vee (0< -x < y) \vee (0< -x < -y).
\end{equation}
This produces a total ordering on $\Gamma_K$ which can be extended to $\Gamma_K^k$ lexicographically. We will also denote this extension by $\lhd$. The important property of the order $\lhd$ is that every definable set of $\Gamma_K$ has a $\lhd$-smallest element (for a proof of this, see the appendix, in particular \ref{preswellorder}). Now consider the map \[\tau: S \to W_N\times (\Gamma_K)^{\leq 2|W_N|}: s \mapsto (\tau_1(s), \tau_2(s)),\]where $\tau_1, \tau_2$ are defined as follows:
\begin{itemize}
\item put $\tau_1(s)= J$, if $J$ is the $\lessdot$-smallest element of $W_N$ such that  
$$(K,\Gamma_K)\models \left[(\exists y\in K^{|J|})(\exists \alpha\in \Gamma_{K}^{2|J|})[X_s=C_J(y,\alpha)]\right].$$
The claim ensures the existence of at least one such $J$ in $W_N$.    
\item let $\tau_2(s)$ be the $\lhd$-smallest tuple $\alpha\in \Gamma_K^{2|\tau_1(s)|}$ such that 
$$(K,\Gamma_K)\models (\exists y\in K^{|J|})[ X_s=C_{\tau_1(s)}(y,\alpha)].$$
\end{itemize}
It is clear that the function $\tau$ will be definable, using some fixed representation for the finite index set $W_N$. For each $J\in \tau_1(S)$, let $S_{J}$ be the set $\{s \in S \mid \tau_1(s)=J\}$. These sets induce a partition of $X$ into sets $X_J:=\{(s,x)\in X: s\in S_J\}$. We show that this partition satisfies all conditions stated in the theorem. Fix $\{\delta_1,\ldots,\delta_{|J|}\}=J\in \tau_1(S)$. The integer $r$ associated to $X_J$ is precisely $r:=|J|$. Let $\Sigma_J:=\bigcup_{s\in S_J} \Sigma_{J,s}$ be the set consisting of fibers
\[\Sigma_{J,s}:= \{y\in K^{r}\ |\ \left[X_s=C_{J}(y,\tau_2(s))\right]\}.\]
Note that these sets are non-empty by definition of $\tau_1$. Given any function $\sigma: S_J \to K^r: s \mapsto (\sigma_1(s), \ldots, \sigma_r(s))$  whose graph is contained in $\Sigma_J$, one then has that $X_s=C_{J}(\sigma(s),\tau_2(s))$ for all $s\in S_J$. For $1\leq j\leq r$ and $i \in \{1,2\}$, define $\alpha_{ij}: S_J\to \Gamma_K$ to be the $ij^{\text{th}}$-component in the tuple $\tau_2(s)$. We obtain that 
\[X_J=\bigcup_{j=1}^r \{(s,x)\in S_J\times K  \ \mid C_{\delta_j}(x,\sigma_j(s),\alpha_{1j},\alpha_{2j},s)\}.\]
Taking the $r$ $K$-cells associated to $X_J$ given by 
\[C_j:=\{(s,x)\in S_J\times K \ \mid \  C_{\delta_j}(x,0,\alpha_{1j},\alpha_{2j},s) \}, 1\leq j\leq r,\] 
it is clear that the translation map $T_{\sigma}$ stated in the theorem gives the required bijection.

%
%
%
%
%
\end{proof}

We now present the preparation theorem for definable subsets of the form $X\subseteq S\times \Gamma_K$. The proof follows a similar scheme as the previous one.  

\begin{prop*}[Proposition \ref{prop:prep}]\label{prop:partialcd2} Let $(K, \Gamma_k, \Lm_2)$ be $P$-minimal. Let $X \subseteq S \times \Gamma_K$ and $f: X \to \Gamma_K$ a definable function. There exists a finite decomposition of $X$ into $\Gamma$-cells $C$ 
such that on each such cell $C$, there exists a constant $a_{C} \in \ZZ$ and a definable function $\delta: D \to \Gamma_K$, such that for all $(x,\gamma) \in C$,
\[f(x,\gamma) = a_C \left(\frac{\gamma -n_0}{n}\right) + \delta(x).\]
 \end{prop*}
\begin{proof}
Since $(K, \Lm)$ is $P$-minimal, it follows from Theorem \ref{thm:semialgpres} that each of the fibers $X_s$ is Presburger definable. Cluckers \cite{clu-presb03} obtained a cell decomposition theorem for Presburger structures. Applying this to the sets $X_s$, yields that each
 $X_s$ can be partitoned into a finite union of cells of the form
\[C_s := \{ \gamma \in K \mid  \alpha_s\ \square_1 \ \gamma \ \square_2 \  \beta_s\quad \text{and}\quad \gamma \equiv n_0 \mod n_s\},\]
where $\alpha_s, \beta_s \in \Gamma_K$ and $n_s \in \NN$ are constants depending on $s$. Also note that for any $s \in S$, the graph of the function $f_s$ will be a Presburger set, by the assumption of $P$-minimality. Indeed, the related set 
\[G_s:=\{(x,y) \in (K^{\times})^2 \mid f_s(\ord\,x) = \ord\,y\}\]
is definable in a $P$-minimal structure, and hence by Theorem \ref{thm:semialgpres}, the set
\[Graph(f_s)=\{(\ord\,x,\ord\,y)\in \Gamma^2\mid (x,y)\in G_s\},\]
is Presburger definable. This means that each $f_s$ is a Presburger definable function, and hence  must be piecewise linear (with coefficients in $\QQ$). In particular, the above partition can be taken such that on each $C_s$, there exist constants $a_s \in \ZZ, \delta_s \in \gamma_K$ such that for all $\gamma \in C_s$, we have that
\[f_s(\gamma) = a_s \left(\frac{\gamma -n_0}{n_s}\right) + \delta_s.\]

\begin{claim}\label{cla:compact1} 
There is a natural number $N\geq 1$ such that for every $s\in S$, the set $X_s$ can be partioned as a union of at most $N$ $\Gamma$-cells, and for each of these cells we can assure that $n_s,|a_s|<N$.
\end{claim}

The claim follows by compactness and $P$-minimality using an analogous argument to the one presentend in Claim \ref{claim:compact}. 

 For an integer $N$ satisfying the requirements of the claim, let \begin{equation*}\label{defPK}
P_{\Gamma,N}:= \{(\square, k,n)\in P_\Gamma \mid n<N\}\hspace{0.5cm}\text{and}\hspace{0.5cm} E_{\Gamma,N} := \bigsqcup_{i=1}^N P_{\Gamma,N}.
\end{equation*}
Recall that $P_{\Gamma}$ consists of elements $\delta=(\square_\delta, k_\delta ,n_\delta)$, encoding the type of a $\Gamma$-cell. 
We will use the notation
\[ C_{\delta}(\alpha, \beta):= \left\{ \gamma \in K \mid \alpha \ \square_{\delta,1} \ \gamma \ \square_{\delta,2} \ \beta \wedge 
\gamma \equiv k_\delta \mod n_\delta \right\}.\]
For every $J\subseteq E_{\Gamma,N}$, fix an enumeration $\{\delta_1,\ldots,\delta_{|J|}\}$ of $J$.  
Given $\alpha=(\alpha_1,\alpha_2)\in \Gamma_K^{2|J|}$, we put
\[C_J(\alpha):=\left\{\begin{array}{ll}
\displaystyle{\bigcup_{i=1}^{|J|} C_{\delta_i}(\alpha_{1i},\alpha_{2i})} & \text{if the sets } C_{\delta_i}(\alpha_{1i},\alpha_{2i})\vspace{-8pt} \\ &\text{are disjoint,} \medskip 
\\\emptyset & \text{otherwise.} 
\end{array}\right.
\]
Let $a\in\ZZ^{|J|}$ be such that $|a_i|<N$ for all $1\leq i\leq |J|$.  Let $x$ be a tuple of variables of the same length (and sorts) as elements in $S$, and $\gamma$ a $\Gamma_K$-variable of length 1. The tuple $\alpha=(\alpha_1,\alpha_2,\alpha_3)$ consists of (tuples of) $\Gamma_K$-variables: $\alpha_1,\alpha_2$ and $\alpha_3$ all have length $|J|$. We define the formula $\phi_{J,a}(x,\alpha)$
as
\small
\begin{equation*}\label{cellformula}
\phi_{J,a}(x,\alpha):=\left(\begin{array}{l}
X_x=C_J(\alpha_1,\alpha_2)\ \wedge \\
\displaystyle\bigwedge_{1\leq i\leq |J|} (\forall \gamma) \left[ \gamma \in C_{\delta_i}(\alpha_{1i},\alpha_{2i})\to \left(f_x(\gamma)=a_i\left(\frac{\gamma-k_{\delta_i}}{n_{\delta_i}}\right)+\alpha_{3i}\right)\right]\end{array}\right).\\
\end{equation*}
\normalsize
Roughly, this formula states that the set $X_x$ can be decomposed into finitely many disjoint $\Gamma$-cells, on each of which the function $f_s$ satisfies the required preparation condition. Define the set $W_N$ by 
\begin{equation*}\label{WN}
W_N:=\{(J,a): J\subseteq E_{\Gamma,N}, a\in \ZZ^{|J|}, |a_i|<N \text{ for all } 1\leq i\leq |J|\}.
\end{equation*}
Since $W_N$ is finite, one can put a total ordering $\lessdot$ on it. As before we work with an alternative total ordering $\lhd$ on the value group $\Gamma_K$ defined as in equation (\ref{eq:order}). We proceed as in the $K$-cell decomposition theorem and define a map \[\sigma: S \to W_N \times (\Gamma_K)^{\leq 3|W_N|}: s \mapsto (\sigma_1(s), \sigma_2(s)),\]where $\sigma_1, \sigma_2$ are defined as follows:
\begin{itemize}
\item put $\sigma_1(s)= (J,a)$, if $(J,a)$ is the $\lessdot$-smallest element of $W_N$ such that  
$$(K,\Gamma_K)\models (\exists \alpha\in \Gamma_{K}^{3|J|})\phi_{J,a}(s,\alpha).$$
Claim \ref{cla:compact1} ensures the existence of at least one such $(J,a)$ in $W_N$.    
\item let $\sigma_2(s)$ be the $\lhd$-smallest tuple $\alpha\in \Gamma_K^{3|\sigma_1(s)|}$ such that 
$$(K,\Gamma_K)\models \phi_{\sigma_1(s)}(s,\alpha).$$
\end{itemize}
It is easy to see that the function $\sigma$ will be definable, using some fixed representation of the finite index set $W_N$. 
We recover the $\Gamma$-cell decomposition for $X$ and the linear functions satisfying the preparation condition in the following way. For each $\lambda=(J,a)\in \sigma_1(S)$, we define sets $S_\lambda$ and $X_{\lambda}$, as \[S_{\lambda}:=\{s\in S \ \mid \sigma_1(s)=\lambda\} \quad  \text{and} \quad X_\lambda:=\{(s,x)\in X: s\in S_\lambda\}.\] This gives us a finite partition of $X$ as $X = \cup_{\lambda} X_{\lambda}$. We will now partition the sets $X_{\lambda}$  as a finite union of $\Gamma$-cells, on each of which $f$ will have the required form.

 For $1\leq j\leq |J|$ and $i\in\{1,2,3\}$, define $\alpha_{\lambda ij}: S_\lambda\to \Gamma_K$ to be the $ij^{\text{th}}$-coordinate of $\sigma_2(s)$. Note that these functions are indeed definable, since $\sigma_2$ is definable. The above construction now implies that
 \[ X_{\lambda} = \bigcup_{1 \leqslant j\leqslant |J|} C_{\lambda, j},\]
 where 
 \(C_{\lambda, j} := \left\{(s, \gamma) \in S_\lambda \times K \mid  \alpha_{\lambda 1j} \ \square_{\delta_j,1} \ \gamma \ \square_{\delta_j,2} \ \alpha_{\lambda 2j} \wedge 
\gamma \equiv k_{\delta_j} \mod n_{\delta_j} \right\}.\) The formula $\phi_{\lambda}(s, \alpha)$ then ensures that for all $(s,\gamma) \in C_{\lambda, j}$, it holds that
\[f_{|C_{\lambda,j}}(s,\gamma) =a_j\left(\frac{t-k_{\delta_j}}{n_{\delta_j}}\right)+\alpha_{\lambda 3j},\]
which completes the proof.
 
\end{proof}

\begin{remark}\label{rem} 
Proposition \ref{prop:prep} can be used to \emph{translate} theorems for parametrized Presburger definable sets $X\subseteq S\times \Gamma_K^m$ to two-sorted $P$-minimal structures, in the following sense.
The same theorems will hold in any two-sorted $P$-minimal structure, where the parameter set $S$ can now be any $\Lm_2$-definable set containing variables in both $K$ and $\Gamma_K$, and the involved Presburger-definable functions should be replaced by functions which are piecewise linear in the $\Gamma_K$-variables, in the sense of Proposition \ref{prop:prep}. 
The corollary stated below is an example of this. 

\end{remark}

Given a $P$-minimal structure $(K,\Gamma_K,\Lm_2)$, we call a definable function $f : X \subseteq S\times\Gamma_K^m \to S\times\Gamma_K^l$ \emph{linear over $S$} if there is a definable function $g: S \to \Gamma_K^l$ and a linear definable function $a: \Gamma_K^m \to \Gamma_K^l$ such that $f(s,t)=(s,g(s)+ a(t))$ for all $(s,t)\in X$. We write $H$ for the set $H:=\{x\in \Gamma_K \mid x\geqslant 0\}$.

\begin{cor}(Parametric rectilinearization)\label{cor:recti}
Let $(K,\Gamma_K,\Lm_2)$ be a $P$-minimal structure and $X\subseteq S\times \Gamma_K^m$ be a definable set. There exists a finite partition of $X$ into
definable sets such that the following holds.

For each part $A$,  there is a set $B\subseteq  S\times \Gamma_K^m$ and a definable bijection  $\rho:A \to B$ which is linear over $S$ such that, for each $s\in S$, the set $B_s$ is
a set of the form $\Lambda_s\times H^l$ for a bounded subset $\Lambda_s\subseteq H^{m-l}$, depending on $s$ (in a definable way), and for an
integer $l\geq 0$ only depending on $A$.

\end{cor}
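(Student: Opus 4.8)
The plan is to reduce to the corresponding statement over Presburger arithmetic and then transfer it via the translation principle of Remark~\ref{rem}. First I would establish the \emph{Presburger analogue}: for a Presburger-definable set $Y\subseteq S_0\times\Gamma_K^m$ with $S_0\subseteq\Gamma_K^{m_0}$, there is a finite partition of $Y$ into pieces, each admitting a Presburger-definable bijection, linear over $S_0$, onto a set whose fibers have the form $\Lambda_s\times H^l$ with $\Lambda_s\subseteq H^{m-l}$ bounded and $l$ constant on the piece. This is the parametric rectilinearization theorem for Presburger sets, and it follows from Cluckers' Presburger cell decomposition \cite{clu-presb03}.

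For the Presburger analogue I would argue by induction on $m$. Decomposing $Y$ into Presburger cells over the first $m-1$ coordinates (grouped with $S_0$ as parameters), on each cell the last coordinate $\gamma_m$ satisfies a condition $\alpha(s')\ \square_1\ \gamma_m\ \square_2\ \beta(s')$ together with a congruence $\gamma_m\equiv k\bmod n$. The linear change of variable $\gamma_m\mapsto(\gamma_m-\alpha(s'))/n$ (its reflection when the cell is only bounded above, or a splitting at $0$ when it is unbounded on both sides) turns this coordinate either into a bounded coordinate, absorbed into the base $\Lambda$, or into a copy of $H$. Collecting the $H$-coordinates and recursing on the rest yields the product form $\Lambda_s\times H^l$; since the offsets $\alpha(s')$ are Presburger-definable and the slopes are the constants $\pm 1/n$, the resulting bijection is linear over $S_0$.

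The transfer to $(K,\Gamma_K)$ then follows the exact template of the proofs of Theorem-Definition~\ref{thm:celdec-trans} and Proposition~\ref{prop:prep}. Given $X\subseteq S\times\Gamma_K^m$ with $S\subseteq K^{m_0}\times\Gamma_K^{m_0}$, each fiber $X_s\subseteq\Gamma_K^m$ is Presburger-definable by Theorem~\ref{thm:semialgpres}, so the Presburger analogue applies fiberwise. A compactness argument identical to Claim~\ref{claim:compact}/\ref{cla:compact1} produces an integer $N$ bounding, uniformly in $s$, the number of pieces, the integer matrices describing the linear parts $a$ of the rectilinearizing maps, and the integer $l$. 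Since only finitely many such \emph{shapes} occur, one partitions $S$ into definable pieces on which these discrete data are constant; this furnishes the sets $A$ together with the constant linear map $a$ and the fixed integer $l$ demanded by the statement. Finally, using the definable well-order $\lhd$ of \ref{preswellorder}, one selects for each $s$ the $\lhd$-smallest tuple of offsets making the chosen rectilinearization valid, exactly as $\tau_2$ and $\sigma_2$ were defined; this selection is $\Lm_2$-definable and yields the offset function $g:S\to\Gamma_K^m$, so that $\rho(s,t)=(s,g(s)+a(t))$ is the desired bijection linear over $S$. Here $g$ may genuinely depend on the $K$-coordinates of $s$, which is precisely the strengthening that Remark~\ref{rem} permits.

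The hard part will be the uniform, definable assembly in this last step rather than the fiberwise statement. Two points require care. First, the compactness bound must simultaneously control the shape of the rectilinearization — the number of pieces, the integer matrix $a$, and the dimension $l$ — so that these become constant on each part $A$; in particular the partition of $X$ must separate fibers according to their rectilinear dimension $l$, so that $l$ depends only on $A$ and not on $s$. Second, one must verify that the $\lhd$-minimal selection of offsets is compatible with the fixed linear part, i.e.\ that within a given part a valid $g(s)$ realizing the same $(a,l)$-shape always exists. Both are handled by the same combination of compactness and definable well-ordering already used above, so no technique beyond Remark~\ref{rem} is needed; the only genuine input specific to this corollary is the Presburger rectilinearization theorem that is being translated.
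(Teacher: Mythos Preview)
Your strategy is sound but differs from the paper's. The paper's proof is a one-line instruction: rerun Cluckers' Presburger argument for parametric rectilinearization (Theorem~3 of \cite{clu-presb03}) verbatim, replacing each invocation of the Presburger preparation theorem by Proposition~\ref{prop:prep}. Since Proposition~\ref{prop:prep} already delivers, in the $P$-minimal setting, the $\Gamma$-cell decomposition of the last coordinate together with $\Lm_2$-definable bounding functions of $s$, the induction on $m$ that you sketch in your first paragraph goes through \emph{directly} over $S$, producing $\Lm_2$-definable partition pieces and offsets at every step. You instead establish the Presburger statement first and then perform a separate compactness-plus-$\lhd$ transfer in the style of Claim~\ref{claim:compact}; this works, but the transfer machinery you invoke is precisely what has already been packaged into Proposition~\ref{prop:prep}, so the paper's route avoids the duplication.

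There is one genuine loose end in your transfer. You write that partitioning $S$ according to the discrete shape data ``furnishes the sets $A$'', and that a $\lhd$-minimal choice of the \emph{offsets} $g(s)$ then completes the bijection. But fixing the shape $(r,a_j,l_j)$ and the offsets does not determine the pieces $A_j\subseteq S\times\Gamma_K^m$: each fiber $X_s$ must still be split into $r$ sub-pieces, and those sub-pieces are Presburger cells whose endpoints are additional $\Gamma_K$-valued parameters depending on $s$. Your $\lhd$-selection must therefore be applied to the full tuple of continuous data---cell bounds \emph{and} offsets simultaneously---exactly as $\sigma_2$ in the proof of Proposition~\ref{prop:prep} selects $(\alpha_1,\alpha_2,\alpha_3)$ together. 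Once that is done the pieces $A_j$ become $\Lm_2$-definable and your argument closes; as written, however, the step from a partition of $S$ to a partition of $X$ is missing.
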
 

%
%

\begin{proof} The proof is almost word for word the proof the same as the proof of the Parametric rectilinearization Theorem for Presburger definable sets (Theorem 3 in \cite{clu-presb03}). One just needs to replace every application of the Presburger function preparation theorem (Theorem 1 in \cite{clu-2003}) by Proposition \ref{prop:prep}.

%
%
\end{proof}
\section{Integration and rationality} \label{sec:integration}

In this section, $K$ denotes a $p$-adic field, so the value group $\Gamma_K$ will just be $\Z$. Two types of integrals will appear. When integrating over (subsets of) $K^m$, the Haar measure $\mu$ is used. When integrating over $\Z^n$, we use the counting measure. The notation $\int_X|dx|$ will be used in both contexts, adapting the measure $|dx|$ to the sort of the variables involved. 

The results below are stated for an $\Lm_2$-constructible function $f: X \subseteq {S \times Y}\to \AA_{q_K}$, where both $S$ and $Y$ are definable sets and $S$ is considered a parameter set. Note that both $S$ and $Y$ may contain variables in both the $K$-sort and the $\Z$-sort, unless explicitly stated otherwise. Recall that the definition of constructible functions was given in Definition \ref{def:constructible}. 
For such a fuction $f$, we define its \emph{locus of integrability} as the set 
\[\text{Int}(f,S):=\{s \in S \mid f(s, \cdot) \text{ is measurable and integrable on } Y_s \}.\] The main result of this section is the following theorem.

\begin{thm} \label{thm:integrals} Let $K$ be a $p$-adic field and $(K, \Z, \Lm_2)$ be a $P$-minimal structure.
Let $S$ be a definable set, and $f:X\subseteq S\times Y\to \AA_{q_K}$ an $\Lm_2$-constructible function such that $\text{Int}(f,S)=S$.  
There exists an $\Lm_2$-constructible function $g: S \to \AA_{q_K}$, such that \[g(s) = \int_{X_s}f(s,y)|dy|,\]
for all $s\in S$.
\end{thm}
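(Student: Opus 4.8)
The plan is to prove the theorem by induction on the number of coordinates of the integration domain $Y$, peeling off one variable at a time via Fubini's theorem and showing at each step that integrating out a single innermost variable sends a constructible function to a constructible function of the remaining (parameter) variables. Writing $Y = Y' \times Z$ with $Z$ a single sort, I reduce to two base cases: integrating out one $K$-variable, and summing out one $\Z$-variable, in each case absorbing $S \times Y'$ into an enlarged parameter set $S'$. Throughout I use that the locus of integrability $\Int(f,S)$ is definable and that, under the hypothesis $\Int(f,S) = S$ together with Tonelli's theorem applied to the nonnegative function $|f|$, the relevant inner integrals and sums converge for all parameter values, so that the iterated integral legitimately computes the original one.

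For the $\Z$-summation case I must show that $s \mapsto \sum_{n \in X_s} f(s,n)$ is constructible for $f : X \subseteq S' \times \Z \to \AA_{q_K}$. Since $f$ is an $\AA_{q_K}$-combination of products of $\Z$-valued definable functions $\alpha_l(s',n)$ and exponentials $q_K^{\beta_l(s',n)}$, I first apply the function-preparation Proposition \ref{prop:prep} (and its multivariate consequence, the rectilinearization Corollary \ref{cor:recti}) to each $\alpha_l,\beta_l$. After a common refinement into finitely many $\Gamma$-cells, every $\alpha_l$ and $\beta_l$ becomes affine-linear in $\frac{n-k}{n_0}$ with definable coefficients in $s'$, so on each cell $f(s',n)$ is a finite sum of terms $P(n)\,q_K^{an+\delta(s')}$ with $P$ a polynomial with $\AA_{q_K}$-coefficients, $a \in \Z$, and $\delta$ definable. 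Summing such a term over $n$ in an interval $\alpha(s') \ \square_1 \ n \ \square_2 \ \beta(s')$ with $n \equiv k \bmod n_0$ produces, via the closed forms for $\sum_n n^d q_K^{an}$ on Presburger intervals, a constructible function of $\alpha(s'),\beta(s')$, hence of $s'$; this is exactly where the generators $\frac{1}{1-q_K^{-i}}$ of $\AA_{q_K}$ are needed to keep the result inside $\AA_{q_K}$.

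For the $K$-integration case, with $f : X \subseteq S' \times K \to \AA_{q_K}$, I apply the $K$-cell decomposition Theorem-Definition \ref{thm:celdec-trans} to $X$, obtaining parts $X_i \subseteq S'_i \times K$, cells $C_j$ centered at $0$, and the definable set $\Sigma_i$ of admissible center-tuples. The difficulty is that the individual centers $\sigma_j$ need not be definable. I resolve this by \emph{promoting the center to a parameter}: I replace $S'_i$ by $\Sigma_i$, adjoining the center-tuple $y = (y_1,\dots,y_r)$ as genuine field variables, and extend $f$ by $\hat f(s',y,u) := f(s',u)$. For every $(s',y) \in \Sigma_i$ the $u$-fiber equals $X_{i,s'}$, so $\int_{X_{i,s'}} f(s',u)\,|du|$ equals $\int \hat f(s',y,u)\,|du|$ independently of $y$, while now each cell carries the \emph{definable} center $y_j$. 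On each cell I refine, increasing the angular-component modulus $m$, so that the definable $\Z$-valued functions $\alpha_l,\beta_l$ factor through $\ord(u-y_j)$ alone, and then translate $v = u-y_j$ using translation-invariance of the Haar measure. Fibering over $\gamma = \ord v$ and using $\mu(\{v : \ord v = \gamma,\ \acm v = \xi\}) = c_m\,q_K^{-\gamma}$ for an explicit constant $c_m$, the cell integral collapses to a value-group sum of the shape already handled in the $\Z$-case, hence to a constructible function of $(s',y)$; since this value does not depend on $y$, it descends to a constructible function of $s'$.

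The main obstacle I expect is the $K$-case, and within it two points. First, the non-definability of the centers — the precise weakness of our cell decomposition — which the parameter-promotion device above is designed to absorb: it is essential that after translation the integrand retains no residual dependence on $y$, so that translation-invariance performs the role definable centers would otherwise play. Second, the preparation lemma asserting that, after refining the angular component, a definable $\Z$-valued function on a $K$-cell depends on $u$ only through $\ord(u-y_j)$; this is what converts the $K$-integral into a Presburger sum and lets me invoke the $\Z$-case, and establishing it cleanly via Theorem \ref{thm:semialgpres} and Proposition \ref{prop:prep} is the technical heart of the argument. The remaining bookkeeping — justifying the interchange of summation and integration, and propagating the everywhere-integrability hypothesis through the induction — I handle by noting that the loci of convergence are definable and appealing to Tonelli's theorem for $|f|$.
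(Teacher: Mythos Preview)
Your reduction via Fubini and your treatment of the $\Z$-summation case are fine and match the paper. The gap is in the $K$-variable step.

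The step you single out as ``the technical heart'' --- arranging, after increasing the angular-component modulus $m$, that the definable $\Z$-valued building blocks $\alpha_l,\beta_l$ of $f$ factor through $\ord(u-y_j)$ alone on each cell --- is exactly the Denef--Cluckers preparation theorem for functions of a $K$-variable, and this is \emph{open} for arbitrary $P$-minimal structures (the introduction says so explicitly). Neither Theorem~\ref{thm:semialgpres} nor Proposition~\ref{prop:prep} supplies it: the former concerns only pure $\Gamma_K$-sets, and the latter prepares functions whose last variable ranges over $\Gamma_K$, not over $K$. Your parameter-promotion device is correct and does render the centers $y_j$ definable, but that is not the obstruction: even with a definable center, a definable $\Z$-valued function on the cell need not become constant on balls of any fixed angular depth --- consider $u\mapsto\ord(u-c)$ for some $c\neq y_j$ lying inside the cell. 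Making your scheme work would require re-running cell decomposition with centers adapted simultaneously to all the $\alpha_l,\beta_l$, which is precisely the missing preparation statement. (A secondary issue: your final descent step, ``since this value does not depend on $y$, it descends to a constructible function of $s'$'', is also not immediate without a definable section $S'_i\to\Sigma_i$, whose existence is exactly what is in doubt.)

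The paper avoids $K$-variable preparation altogether. Rather than decomposing $X$ and then preparing $f$ on the pieces, it fibers $X$ by the \emph{values} of the generating functions $f_{ij}$: setting $G=\{(s,\gamma,x):\gamma_{ij}=f_{ij}(s,x)\}$ one obtains
\[
\int_{X_s} f(s,x)\,|dx| \;=\; \sum_{\gamma\in D_s}\Bigl(\textstyle\sum_i a_i\,q_K^{\gamma_{i0}}\prod_j\gamma_{ij}\Bigr)\,\mu(G_{s,\gamma}),
\]
so the only thing needed from the $K$-side is that $(s,\gamma)\mapsto\mu(G_{s,\gamma})$ be constructible. That is Proposition~\ref{prop:constructible measure}, which uses only the $K$-cell decomposition of the \emph{set} $G$ together with translation-invariance of the Haar measure --- no function preparation over $K$ is ever invoked. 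The remaining sum over $\gamma$ then falls under the $\Z$-case you already have.
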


This is a  partial generalization of results which were already proven for specific cases by Cluckers, Gordon and Halupczok in \cite{Clu-Gor-Hal-14}. The generalization is partial because their results do not require the assumption $\text{Int}(f,S)=S$,  instead relying on an interpolation lemma, replacing $f$ by a function $\tilde{f}$ that coincides with $f$ on its locus of integrability, and for which $\text{Int}(\tilde{f},S)=S$. If a similar interpolation lemma can be proven to hold in general $P$-minimal structures, the assumption that $\text{Int}(f,S)=S$, can be removed from our result as well.

\begin{prop}\label{prop:int1}
Theorem \ref{thm:integrals} holds when $X \subseteq \Z^r$.
\end{prop}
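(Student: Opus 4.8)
The plan is to reduce the statement to a closed-form evaluation of parametrized Presburger sums of polynomial-times-geometric type. Since $X \subseteq \Z^r$, both $S$ and $Y$ lie in powers of $\Gamma_K = \Z$, and by Theorem \ref{thm:semialgpres} every function in sight is Presburger-definable. By Fubini for the counting measure it suffices to integrate out one $\Z$-variable at a time, so I may assume $Y = \Z$ and $X \subseteq S \times \Z$. By linearity over $\AA_{q_K}$, and since the $\AA_{q_K}$-coefficients are constants that factor through the sum, it is enough to treat a single generating monomial $f(s,\gamma) = \prod_i \alpha_i(s,\gamma) \cdot \prod_j q_K^{\beta_j(s,\gamma)}$, where the $\alpha_i, \beta_j : X \to \Z$ are definable.

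Next I would apply the function preparation result, Proposition \ref{prop:prep}, simultaneously to all of the finitely many functions $\alpha_i$ and $\beta_j$ — formally by taking a common refinement of the individual $\Gamma$-cell partitions. This yields a finite partition of $X$ into $\Gamma$-cells $C$ on each of which every $\alpha_i$ and $\beta_j$ is affine in $\gamma$: writing the type of $C$ as $(\square, k, n)$ and substituting $\gamma = k + n\ell$, the cell becomes $\{\ell \in \Z : L_0(s)\ \square_1\ \ell\ \square_2\ L_1(s)\}$ with definable endpoints $L_0, L_1$, and the summand takes the shape $P_s(\ell)\, (q_K^{c})^{\ell}\, q_K^{\delta(s)}$, where $c \in \Z$ is the constant slope coming from the $\beta_j$, $P_s \in \QQ[\ell]$ has constructible coefficients built polynomially from the $\delta_i(s)$ coming from the $\alpha_i$, and $q_K^{\delta(s)}$ collects the $s$-dependent exponential part.

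It then remains to evaluate $\sum_\ell P_s(\ell)\,(q_K^c)^\ell$ over the interval prescribed by the cell. Using the standard identities for $\sum_{\ell} \ell^d x^\ell$, obtained by repeated application of $x\tfrac{d}{dx}$ to the geometric series, each such sum evaluates to an expression lying in the $\AA_{q_K}$-algebra generated by the constructible functions $s \mapsto L_i(s)$ and $s \mapsto q_K^{c L_i(s)}$: the only denominators produced are powers of $1 - q_K^{c}$, and these are invertible in $\AA_{q_K}$, since for $c<0$ the factor $(1-q_K^{c})^{-1}$ is a defining generator, while for $c>0$ one rewrites $1-q_K^{c} = -q_K^{c}(1-q_K^{-c})$ in terms of the generator $(1-q_K^{-c})^{-1}$. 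When an endpoint symbol $\square_i$ is \emph{no condition}, the summation range is unbounded; here the hypothesis $\Int(f,S) = S$ is exactly what guarantees convergence, forcing $|q_K^{c}| < 1$ in the relevant direction (and excluding a nonzero polynomial tail in the case $c=0$), so the infinite geometric sum again returns an element of $\AA_{q_K}$. Summing the contributions of the finitely many cells gives the desired constructible $g(s)$.

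The main obstacle is the bookkeeping of the unbounded cases: one must match the direction of the inequality $\square_i$ to the sign of $c$, check that the integrability hypothesis genuinely rules out divergent tails (including the purely polynomial $c=0$ case), and verify that every closed form produced stays inside $\AA_{q_K}$ rather than merely in $\QQ(q_K)$. The bounded sums are routine Faulhaber/geometric computations; the delicate point throughout is ensuring that the passage to infinite ranges is licensed by $\Int(f,S)=S$ and lands in the right ring.
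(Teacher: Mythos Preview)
The paper itself does not prove this proposition directly: it simply observes that, since $X\subseteq\Z^r$, Theorem \ref{thm:semialgpres} makes everything Presburger-definable, and then cites Theorem 2.1.6 of \cite{Clu-Gor-Hal-14}. Your proposal instead reconstructs that Presburger computation explicitly via Proposition \ref{prop:prep} and closed-form geometric sums. That is a legitimate and more self-contained route, and your remarks about $\AA_{q_K}$ absorbing the denominators $(1-q_K^{\pm c})^{-1}$ are correct.

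There is, however, a genuine gap in the order of your reductions. You first pass ``by linearity'' to a single monomial $m(s,\gamma)=\prod_i\alpha_i\cdot q_K^{\sum_j\beta_j}$, and only afterwards invoke the integrability hypothesis $\Int(f,S)=S$ to force $|q_K^{c}|<1$ on unbounded cells. But $\Int(f,S)=S$ says nothing about the integrability of an individual monomial: already $f=q_K^{\gamma}-q_K^{\gamma}=0$ is everywhere integrable while neither summand is summable on $\NN$. So on an unbounded cell your single prepared term $P_s(\ell)\,q_K^{c\ell}$ may well have $c\geq 0$, and the hypothesis on $f$ gives you no leverage to exclude this. Your final paragraph flags exactly this ``bookkeeping of the unbounded cases'' as the delicate point, but the earlier reduction to one monomial has already made it impossible to carry out.

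The fix is to reverse the order: apply Proposition \ref{prop:prep} simultaneously to \emph{all} the $\alpha_i,\beta_j$ occurring in \emph{all} the monomials of $f$, on a common refinement. On each cell one then has $f(s,k+n\ell)=\sum_c Q_{c,s}(\ell)\,q_K^{c\ell}$, where the finitely many exponents $c\in\Z$ are fixed integers and each $Q_{c,s}$ is a polynomial in $\ell$ with constructible coefficients. Now $\Int(f,S)=S$ really does work: if the cell is unbounded above, the asymptotic $|f(s,k+n\ell)|\sim |Q_{c^*,s}(\ell)|\,q_K^{c^*\ell}$ with $c^*=\max\{c:Q_{c,s}\neq 0\}$ forces $Q_{c,s}\equiv 0$ for every $c\geq 0$ (using that the functions $\ell^d q_K^{c\ell}$ are $\QQ$-linearly independent), and symmetrically at the lower end. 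One then keeps only the convergent exponents in the closed-form sum, which is still constructible in $s$ and agrees with $\int_{X_s} f(s,\cdot)$ for every $s\in S$. This is essentially how the argument in \cite{Clu-Gor-Hal-14} proceeds.
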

\begin{proof}
Note that in this case, $X$ is $\Lpres$-definable, by Theorem \ref{thm:semialgpres}. Proofs can be found in \cite{Clu-Gor-Hal-14}, Theorem 2.1.6. 
\end{proof}

\begin{prop}
Theorem \ref{thm:integrals} holds when $\Lm_2 = \Lringtwo$ or $\Lm_{\text{an},2}$.
\end{prop}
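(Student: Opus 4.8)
The plan is to reduce to the classical integration theorems, since both $\Lringtwo$ and $\Lm_{\text{an},2}$ are precisely the settings in which stability of constructible functions under integration is already established. For $\Lm_2 = \Lringtwo$ this is Denef's theorem (see \cite{denef-85, denef-2000}), and for $\Lm_2 = \Lm_{\text{an},2}$ it is the subanalytic analogue of Denef--van den Dries and Cluckers \cite{clu-2003}; both are subsumed by the general integration result of Cluckers, Gordon and Halupczok \cite{Clu-Gor-Hal-14}. The crucial observation is that these results are in fact stronger than what we need here: they produce a constructible $g$ satisfying $g(s) = \int_{X_s} f(s,y)\,|dy|$ for every $s$ in the locus of integrability, \emph{without} assuming $\text{Int}(f,S) = S$ (this is exactly where their interpolation lemma enters). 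Since our hypothesis $\text{Int}(f,S) = S$ is merely a special case, the conclusion follows a fortiori.

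The steps I would carry out are as follows. First, recall the precise statement of the cited integration theorem in each of the two languages. Second, verify that the notion of constructible function from Definition \ref{def:constructible} and the two-sorted, mixed-sort parameter setup used in this section coincide with (or immediately reduce to) the framework of the cited works. Third, apply the theorem directly to obtain the constructible function $g$, noting that $\AA_{q_K}$-valued constructible functions are exactly the objects those theorems manipulate.

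The one point requiring care is the matching of frameworks: here both the parameter set $S$ and the domain of integration $Y$ may carry variables in both the $K$-sort and the $\Z$-sort, whereas the cleanest classical statements integrate either over $K^m$ with the Haar measure or over $\Z^n$ with the counting measure. I would handle this by writing $Y$ as a product of its $K$-part and its $\Z$-part and integrating iteratively (Fubini): integration over the $K$-variables is covered by the semialgebraic, respectively subanalytic, theorem with the remaining $\Z$-variables absorbed into the parameters (which the cited results permit), while integration over the $\Z$-variables is covered by Proposition \ref{prop:int1}. As the class of $\Lm_2$-constructible functions is closed under each stage of this iteration, no obstruction arises; this bookkeeping, together with confirming that the measurability and integrability hypotheses transfer correctly through the iterated integral, is the only substantive content of the argument.
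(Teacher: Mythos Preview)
Your proposal is correct and takes essentially the same approach as the paper: the paper's proof is a one-line citation to \cite{Clu-Gor-Hal-14}, Theorem 3.1.1, which already covers the full mixed-sort parameter and integration setting, so your framework-matching and Fubini bookkeeping, while not wrong, are unnecessary.
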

\begin{proof}
See \cite{Clu-Gor-Hal-14}, Theorem 3.1.1.
\end{proof}

As a first step towards a general proof of Theorem \ref{thm:integrals}, we show that it already holds when $Y\subseteq \Z^r$:

\begin{prop}\label{prop:int2}
Theorem \ref{thm:integrals} holds when $Y \subseteq \Z^r$. 
\end{prop}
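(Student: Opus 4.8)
The plan is to deduce Proposition~\ref{prop:int2} from the purely Presburger case, Proposition~\ref{prop:int1}, via the \emph{translation principle} of Remark~\ref{rem}. The only feature distinguishing the present statement from Proposition~\ref{prop:int1} is that the parameter set $S$ may now contain $K$-variables; since the integration variable $y$ ranges over $Y_s \subseteq \Z^r$ and the counting measure is used, those $K$-variables can only enter the answer through finitely many $\Z$-valued $\Lm_2$-definable functions. By Definition~\ref{def:constructible}, $f$ is an $\AA_{q_K}$-combination of products of generators $\alpha(s,y)$ and $q_K^{\beta(s,y)}$ with $\alpha,\beta$ being $\Z$-valued and $\Lm_2$-definable on $X$; collecting exponents, each such product has the form $\alpha_1(s,y)\cdots\alpha_k(s,y)\,q_K^{\beta(s,y)}$. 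As the sum over $y$ is linear, it suffices to produce a constructible $g$ for one such monomial.

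Next I would treat $S$ as the parameter set and prepare, uniformly in $s$, both the domain $X$ and the family of building blocks $\{\alpha_1,\dots,\alpha_k,\beta\}$. Combining parametric rectilinearization (Corollary~\ref{cor:recti}) for the domain with repeated applications of Proposition~\ref{prop:prep} for the functions (over each of the $r$ coordinates of $y$) and passing to a common refinement yields a finite partition of $X$ into definable pieces $A$, each equipped with a bijection $\rho\colon A\to B$ that is linear over $S$ and satisfies $B_s=\Lambda_s\times H^l$ for a bounded $\Lambda_s\subseteq H^{m-l}$. I would arrange the refinement so that on each piece every building block, transported by $\rho^{-1}$, becomes an affine-linear function of the $\Z^r$-coordinate, with integer slopes and with intercepts given by $\Z$-valued $\Lm_2$-definable functions of $s$.

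Because $\rho$ acts fibrewise as a translation and the counting measure on $\Z^r$ is translation-invariant, the sum over $A_s$ equals the sum over $B_s$ of the transported integrand; the hypothesis $\text{Int}(f,S)=S$ guarantees absolute summability, so this rearrangement is legitimate. On a rectilinear fibre $\Lambda_s\times H^l$ the integrand is now a product of affine-linear functions of $y$ times $q_K^{c_0(s)+\sum_i c_i y_i}$, and its sum can be evaluated explicitly by the geometric-series identities available in $\AA_{q_K}$ --- this is exactly the computation underlying Proposition~\ref{prop:int1}. The outcome is a constructible function of the cell data: the bounds describing $\Lambda_s$, the integer slopes $c_i$, and the definable intercepts.

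Finally, since all of this cell data consists of $\Z$-valued $\Lm_2$-definable functions of $s$, substituting it into the Presburger summation output places $g$ in the $\AA_{q_K}$-algebra generated by $\Z$-valued definable functions and their $q_K$-exponentials; that is, $g$ is $\Lm_2$-constructible. Summing over the finitely many pieces $A$ completes the argument. I expect the main obstacle to be the second step: obtaining the preparation \emph{simultaneously} for all building blocks, over all $r$ coordinates at once, and \emph{uniformly} in the mixed parameter $s$, while certifying that the slopes, intercepts, and summation bounds which appear are genuine $\Z$-valued $\Lm_2$-definable functions of $s$ rather than merely definable for each individual $s$. This uniformity is precisely what the compactness arguments behind Proposition~\ref{prop:prep} and Corollary~\ref{cor:recti} supply, and it is what allows the Cluckers--Gordon--Halupczok summation formulas to transfer to the parametric $P$-minimal setting.
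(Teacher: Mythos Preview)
Your proposal is correct and follows essentially the same approach as the paper: the paper's proof is a terse pointer to the Cluckers--Gordon--Halupczok argument, noting that one need only replace their parametric rectilinearization by Corollary~\ref{cor:recti} (which in turn rests on Proposition~\ref{prop:prep} and the translation principle of Remark~\ref{rem}). Your write-up is a faithful and more explicit unpacking of exactly that strategy---reduction to monomials, simultaneous preparation of domain and building blocks via Corollary~\ref{cor:recti} and Proposition~\ref{prop:prep}, fibrewise summation by geometric series as in Proposition~\ref{prop:int1}, and reassembly---so there is no substantive difference in method.
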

\begin{proof}
This is essentially a consequence of Proposition \ref{prop:prep} (see also the remark on page \pageref{rem}). In \cite{Clu-Gor-Hal-14}, this proposition was proven under the assumption that $\Lm_2 = \Lringtwo$ or $\Lm_{\text{an},2}$. Part (1) corresponds to Theorem 3.4.5 and part (2) to Theorem 3.1.1 in \cite{Clu-Gor-Hal-14}. If one replaces their Parametric rectilinearization Theorem (Proposition 3.4.4 in \cite{Clu-Gor-Hal-14}) by Corollary \ref{cor:recti}, the same proof also works for two-sorted $P$-minimal structures. 
\end{proof}

We will reduce the general case to Proposition \ref{prop:int2} using the following observation on the measure of definable sets.

\begin{prop}\label{prop:constructible measure} Let $(K,\Gamma_K, \Lm_2)$ be a $P$-minimal structure.
Let $X \subseteq S\times T$ be a definable set, where $T$ is $K$ or $\Z$. There exists a constructible function $g:S\to \AA_{q_K}$, such that $g$ uniformly measures the fibers $X_s$, that is,
\[g(s) = \int_{X_s}|dt|,\]
whenever $X_s$ has finite measure. Moreover, the set 
\[\tilde{S}:=\{s\in S \ | \ X_s \text{ has finite measure}\},\] 
is definable.
\end{prop}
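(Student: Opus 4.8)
The plan is to treat the two sorts separately, the case $T=K$ carrying the real content. For $T=\Z$ the relevant measure is the counting measure, so $X_s$ has finite measure precisely when it is finite, equivalently bounded. Each fiber $X_s\subseteq\Gamma_K$ is Presburger-definable by Theorem \ref{thm:semialgpres}, and boundedness is a first-order condition on the endpoints, so $\tilde S=\{s\mid X_s \text{ is bounded}\}$ is definable. On $\tilde S$ the constant function $\mathbf 1$ is integrable over every fiber (precisely because the fibers are finite), so Proposition \ref{prop:int2} applies and yields a constructible $g$ with $g(s)=\#X_s=\int_{X_s}|dt|$ on $\tilde S$; I set $g:=0$ off $\tilde S$. (Alternatively one decomposes $X$ directly into $\Gamma$-cells and reads off the cardinality of each cell as a piecewise-linear function of its definable endpoints.)

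For $T=K$ I would begin from the $K$-cell decomposition of $X$ furnished by Theorem-Definition \ref{thm:celdec-trans}: a partition of $X$ into finitely many pieces $X_i\subseteq S_i\times K$ (the $S_i$ partitioning $S$), each equipped with $K$-cells $C_1,\dots,C_r$ centered at $0$ and a translation $T_\sigma$ mapping $\bigsqcup_j C_j$ bijectively onto $X_i$. The decisive observation — and the reason this deliberately weak form of cell decomposition is enough — is that the Haar measure is translation invariant, so for $s\in S_i$ one has $\mu(X_s)=\sum_{j=1}^{r}\mu\big((C_j)_s\big)$, where the $(C_j)_s$ are fibers of the genuinely definable centered cells. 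The possibly non-definable centers $\sigma$ never enter the computation.

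It then remains to measure a single centered cell fiber
\[(C_j)_s=\{t\in K\mid \alpha_j(s)\ \square_1\ \ord t\ \square_2\ \beta_j(s),\ \ord t\equiv k_j\bmod n,\ \acm(t)=\xi_j\}.\]
When $\xi_j\neq0$, each admissible valuation $\gamma$ contributes a single ball of measure $q_K^{-m}q_K^{-\gamma}$, so
\[\mu\big((C_j)_s\big)=q_K^{-m}\sum_{\gamma\equiv k_j\,(n),\ \alpha_j(s)\,\square_1\,\gamma\,\square_2\,\beta_j(s)}q_K^{-\gamma},\]
a geometric sum over an arithmetic progression inside an interval (the case $\xi_j=0$ gives the single point $0$, of measure $0$). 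This sum converges if and only if the progression is bounded below, i.e. $\square_1$ is ``$<$'' or the cell is empty; summing the series then produces an expression built from the constant $q_K^{-m}$, the generator $(1-q_K^{-n})^{-1}$ of $\AA_{q_K}$, and terms $q_K^{-\gamma_{\min}(s)},q_K^{-\gamma_{\max}(s)}$, where $\gamma_{\min},\gamma_{\max}\colon S_i\to\Gamma_K$ are the definable endpoints of the progression. Since $q_K^{-\delta(\cdot)}$ is constructible for any $\Z$-valued definable $\delta$, each finite $\mu((C_j)_s)$ is constructible, hence so is $g(s)=\sum_j\mu((C_j)_s)$; gluing over the finitely many pieces via their definable characteristic functions gives a global constructible $g$ (extended by $0$ where some cell is infinite). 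For definability of $\tilde S$, note that on each piece $X_s$ has infinite measure exactly when some non-degenerate cell with $\square_1=$ ``no condition'' is nonempty, which is again a definable condition.

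I expect the only genuine subtlety to be bookkeeping: checking that $\gamma_{\min},\gamma_{\max}$ are definable functions of $s$ (they are, as least and greatest solutions of a Presburger condition in the definable bounds $\alpha_j,\beta_j$), and that the edge cases — empty cells, $\xi_j=0$, one-sided bounds, and values $\alpha_j(s)$ or $\beta_j(s)$ equal to $+\infty$ — are absorbed without disturbing finiteness. The conceptual heart, by contrast, is the translation-invariance remark that lets the non-definable centers drop out of the measure computation entirely.
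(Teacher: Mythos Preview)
Your proposal is correct and follows essentially the same approach as the paper: for $T=\Z$ you invoke Proposition~\ref{prop:int2} with the constant function $1$ and use the $\Gamma$-cell picture for definability of $\tilde S$, and for $T=K$ you apply the $K$-cell decomposition, exploit translation invariance of the Haar measure to drop the (possibly non-definable) centers, and evaluate the resulting geometric sums explicitly. The paper carries out exactly these steps, with the same identification of the finiteness condition ($\square_1$ must be ``$<$'') and the same constructible endpoint functions $\tilde\alpha,\tilde\beta$ playing the role of your $\gamma_{\min},\gamma_{\max}$.
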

\begin{proof}
When $T = \Z$, this is a consequence of Proposition \ref{prop:int2}, where $f$ is the constant function $f(x) =1$ for all $x \in X$. The fact that the set $\tilde{S}$ is definable follows from the cell-decomposition part of Proposition \ref{prop:prep}. Indeed, a $\Gamma$-cell $B\subseteq \Z$ (as in definition \ref{def:cell}) has finite measure if and only if both $\square_1$ and $\square_2$ are $<$ on such cell. This is a definable condition. 

Let us now  consider the case where $T = K$. By (the translation version of) the $K$-cell decomposition theorem (i.e., Theorem \ref{thm:celdec-trans}), we can partition $X$ in parts $X_i \subseteq S_i \times K$. On each of these parts, for any choice of a function $\sigma$ with image contained in $\Sigma$, we have that
\[ \int_{(X_i)_s} |dt| = \int_{T_{\sigma}(\sqcup (C_j)_s)} |dt|
= \sum_j \left[\int_{(C_j)_s} |dt|\right],
\]
and hence it suffices to compute the integral $\int_{(C_j)_s} |dt|$.
Assume that $C_j$ is the zero-centered $K$ -cell 
\[C_{j}:=\left\{(s, t) \in S_i \times K \left| \begin{array}{l} 
\alpha_j(s) \ \square_{j,1}\ \ord \ t \ \square_{j,2}\ \beta_j(s),\\
\ord \ t \equiv k_j \mod N,\\ \ac_M(x) = \xi_j
\end{array}
\right\}\right..\]
Computing the measures of these cells, we get that
\begin{eqnarray*}
\int_{({C}_{j})_{s}}|dt|&=& \sum_{\tau \in T_j}\mu\left(\xi_j \pi_K^{k_j + \tau N} (1 + \pi^{M}\cO_K)\right),\\
&=&|\xi|q_K^{-(k_j+M)}\sum_{\tau \in T_j} (q_K^{-N})^\tau,
\end{eqnarray*}
where $\pi_K$ is a uniformizing element for $K$ and $T_j$ is the set \[T_j:=\{\tau \in \Gamma_K \mid \alpha_j(s ) \ \Box_{j,1}\ k_j + \tau N\ \square_{j,2}\ \beta_j(s)\}.\]
It is easy to see that $(C_j)_s$ (and hence $X$) can only have finite measure if $\square_{j,1}$ denotes $<$ for $j = 1, \ldots, l$. Since this is a property of the cell, this is a definable condition.

We get the following results for this sum. 
If we put $\tilde{\alpha}:= \lfloor\frac{\alpha -k}{N}\rfloor+1$, and $\tilde{\beta}:= \lceil\frac{\beta -k}{N}\rceil-1$ (clearly these are still definable functions), then we get that
\begin{equation}\label{eq:intcases} \sum_{\tau \in T_j} (q_K^{-N})^\tau = \left\{
\begin{array}{ll}
\frac{q_K^{-N\tilde{\alpha}_j}}{1-q_K^{-N}} & \text{if\ \ } \square_{j,2} = \emptyset,\\ 
\frac{1}{1-q_K^{-N}}(q_K^{-N\tilde{\alpha}_j}-q_K^{-N\tilde{\beta}_j}) & \text{if \ \ } \square_{j,2} = <.
\end{array}\right.\end{equation}
In both cases we obtain an $\Lm_2$-constructible function. Hence, we can conclude that $\mu(X_s)$ is given by a constructible function as well.
\end{proof}

We can now complete the proof of the main theorem:

\begin{proof}[Proof of Theorem \ref{thm:integrals}]
Since $\text{Int}(f,S)=S$, by Fubini's theorem, the general result can be obtained by iteration and we may assume that either $Y= \Z$, or $Y=K$. The first case is already included in Proposition \ref{prop:int2}, so we only need to consider the case $Y = K$.

A general constructible function $f: X \subseteq S\times K \to \AA_{q_K}$ has the form
\[f(s,x) = \sum_{i=1}^r a_i q_K^{f_{i0}(s,x)} \prod_{j=1}^{r'}f_{ij}(s,x),\]
where the $f_{ij}$ are definable functions $X \to \ZZ$, and $a_i \in \AA_{q_K}.$
Now put $\gamma = (\gamma_{ij})_{i,j}$ and consider the set
\[G:= \{(s,\gamma, x)\in S\times \Z^{(r'+1)r}\times K \mid  \gamma_{ij} = f_{ij}(s,x)\},\]
which is a permutated version of the combined graphs of the functions generating $f$. To ease notations, we will sometimes consider $G$ as a subset of $D\times K$, where $D = \Pi_{S \times \Gamma_K^{(r'+1)r}}(G)$. 
%
%
Let $\mu$ denote the usual Haar measure. 
The integral of $f_s$ can be written as a sum ranging over $\text{Im}(f_s)$:

\[ \int_{X_s} {f(s,x)}|dx|
 = \sum_{\delta \in \text{Im}(f_s)} \delta \cdot \mu\{x\in X_s \mid f_s(x) = \delta\},\]
 and this sum can be expressed in terms of the variables $\gamma$, to obtain a sum
\[ \sum_{\gamma \in D_s}  \left[\left(\sum_{i=1}^r a_i q_K^{\gamma_{i0}} \prod_{j=1}^{r'}\gamma_{ij}\right)\cdot \mu\left(\{x\in X_s \mid \bigwedge_{ij} f_{ij}(s,x) = \gamma_{ij}\}\right)\right]\] 
This reduces the integral to a sum
\begin{equation}\label{eq:int}
 \int_{X_s} {f(s,x)}|dx| = \sum_{\gamma \in D_s} \left(\sum_{i=1}^r a_i q_K^{\gamma_{i0}} \prod_{j=1}^{r'}\gamma_{ij}\right) \cdot \mu(G_{s,\gamma}).
\end{equation}
Applying Proposition \ref{prop:constructible measure}, we know that $\mu(G_{s,\gamma})$ is given by a constructible function, whenever $G_{s,\gamma}$ has finite measure. Since this is a definable condition, we may as well assume that the measure of $G_{s, \gamma}$ is finite for all $s \in S$ and $\gamma \in \text{Im}(f_s)$. Hence, we can conclude that 
\begin{equation}\label{eq:int}
 \int_{X_s} {f(s,x)}|dx| = \sum_{\gamma \in D_s} h(s,\gamma),
\end{equation}
for some constructible function $h: D \to \AA_{q_K}$. Noticing that 
\[
\sum_{\gamma \in D_s} h(s,\gamma)=\int_{D_s} {h(s,\gamma)}|d\gamma|, 
\]
the result follows by Proposition \ref{prop:int2} applied to the constructible function $h$.

\end{proof}

As a consequence of Theorem \ref{thm:integrals}, we obtain the following rationality result.

\begin{cor}\label{thm:rationalitygeneral}
Suppose that $(K,\Z)$ is $P$-minimal. Let $X$ be a definable subset of $\NN\times D$, where $D$ is a compact subset of $K^m$. 
Then the series $\sum_{n\geq 0} \mu(X_n) T^n$ is a rational function. More precisely, 
\[\sum_{n\geq 0} \mu(X_n) T^n = \frac{Q(T) }{\prod_{i=1}^r(1-q_K^{-m_i}T^N_i)},\]
for certain integers $m_i,r \in \NN$, $N_i >0$ and $Q(T) \in \AA_{q_K}[T]$. 
\end{cor}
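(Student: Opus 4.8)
The plan is to show that the coefficient $\mu(X_n)$ is a constructible function of the single variable $n$, and then to sum it against $T^n$ using standard geometric-series identities. First I would invoke Theorem~\ref{thm:integrals} with parameter set $S=\NN$, with $Y=D$, and with $f\equiv 1$. Since $D$ is compact, each fibre $X_n\subseteq D$ satisfies $0\le\mu(X_n)\le \mu(D)<\infty$, so $\Int(f,S)=S$ and the hypothesis of the theorem holds. This produces an $\Lm_2$-constructible function $g\colon \NN\to\AA_{q_K}$ with $g(n)=\mu(X_n)$ for all $n$. (Alternatively, one may iterate Proposition~\ref{prop:constructible measure} across the $m$ coordinates of $D$.)

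Next I would determine the eventual shape of a constructible function of one $\Z$-variable. By Definition~\ref{def:constructible}, $g$ lies in the $\AA_{q_K}$-algebra generated by $\Z$-valued definable functions $\alpha(n)$ and exponentials $q_K^{\beta(n)}$. As the domain $\NN$ lies in the value-group sort, Theorem~\ref{thm:semialgpres} forces every such $\alpha,\beta$ to be Presburger-definable, hence piecewise linear. Consequently, after partitioning $\NN$ into finitely many arithmetic progressions $\{c+N'k : k\ge 0\}$ together with finitely many exceptional points, on each progression $g(c+N'k)$ is a finite $\AA_{q_K}$-linear combination of terms $P(k)\,q_K^{ak}$ with $P\in\QQ[k]$ and $a\in\Z$.

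Then I would compute $\sum_{n\ge 0} g(n)T^n$ piecewise. Each progression contributes $\sum_{k\ge 0}P(k)\,q_K^{ak}\,T^{c+N'k}$ up to a constant in $\AA_{q_K}$, and the identity $\sum_{k\ge 0}\binom{k+d}{d}z^k=(1-z)^{-(d+1)}$ with $z=q_K^{a}T^{N'}$ shows each such sum is rational with denominator a power of $1-q_K^{a}T^{N'}$; the finitely many exceptional points add only a polynomial. Placing the finitely many pieces over a common denominator yields $\sum_{n\ge0}\mu(X_n)T^n=Q(T)/\prod_i(1-q_K^{a_i}T^{N_i})$ with $Q\in\AA_{q_K}[T]$ and $N_i>0$.

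Finally, to match the asserted form with non-positive $q_K$-exponents, I would use compactness once more: since $\mu(X_n)\le\mu(D)$ is bounded, the coefficient sequence does not grow, so $\sum_n\mu(X_n)T^n$ has radius of convergence at least $1$ (viewing $q_K$ as the fixed real number $>1$). Hence in lowest terms every pole lies in $\{|T|\ge 1\}$, which rules out surviving denominator factors $1-q_K^{a_i}T^{N_i}$ with $a_i>0$, whose roots have modulus $q_K^{-a_i/N_i}<1$; the remaining factors have $a_i=-m_i$ with $m_i\in\NN$, giving the stated expression. I expect the main obstacle to be precisely this last bookkeeping: establishing the arithmetic-progression normal form of a one-variable constructible function and controlling the signs of the exponents, where the compactness of $D$ is exactly the ingredient that forces $m_i\ge 0$.
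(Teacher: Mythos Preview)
Your proposal is correct and follows the same strategy as the paper: apply Theorem~\ref{thm:integrals} with $S=\NN$ and $f\equiv 1$ to obtain a constructible $g(n)=\mu(X_n)$, then observe via Theorem~\ref{thm:semialgpres} that the defining functions $\alpha,\beta$ are Presburger-definable (hence piecewise linear on arithmetic progressions), and conclude rationality of $\sum_n g(n)T^n$. The only difference is in the last step: the paper simply remarks that $g$ is therefore $\Lringtwo$-constructible and invokes Denef's semi-algebraic rationality results as a black box, whereas you carry out the summation explicitly via geometric-series identities and then use the bound $\mu(X_n)\le\mu(D)$ (radius of convergence $\ge 1$) to force the surviving denominator factors to have non-positive $q_K$-exponents---a more self-contained execution of the same argument.
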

\begin{proof}

Applying Theorem \ref{thm:integrals} to the set $X \subseteq \NN\times D$, one can find a constructible function $g: \NN \to \AA_{q_K}$, such that
\[g(n) = \int_{X_n}|dx|.\]
This function must have the form 
\[g(n) = \sum_{i=1}^r a_iq_K^{\alpha_i(n)}\prod_j\beta_{ij}(n),\]
where $a_i \in \AA_{q_K}$, and the functions $\alpha_i$ and $\beta_{ij}$ are Presburger-definable functions $\NN \to \ZZ$, and hence it is actually $\Lringtwo$-constructible. Our claim now follows from Denef's rationality results in the semi-algebraic case, for which we refer to eg. \cite{denef-84, denef-85}.\end{proof}

We finish by presenting as a conjecture a version of interpolation for $P$-minimal constructible functions: 

\begin{conjecture}(Interpolation)\label{cong:interpolation} Let $K$ be a $p$-adic field and $(K, \Z, \Lm_2)$ be a $P$-minimal structure. For every $\Lm_2$-constructible function $f:X\subseteq S\times Y\to \AA_{q_K}$ there exists an $\Lm_2$-constructible function $g:X\subseteq S\times Y\to \AA_{q_K}$  such that $\text{Int}(g,S)=S$ and $f(s, y)= g(s, y)$ whenever $s\in \text{Int}(f,S)$. 
\end{conjecture}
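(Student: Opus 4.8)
The plan is to reduce the conjecture to a single definability question. Observe first that if the \emph{locus of integrability} $\text{Int}(f,S)\subseteq S$ is a definable set, then the conjecture is immediate: one simply sets
\[ g(s,y) := \mathbf{1}_{\text{Int}(f,S)}(s)\cdot f(s,y), \]
so that $g$ agrees with $f$ on the fibres over $\text{Int}(f,S)$ and is identically $0$ on the remaining fibres. Indeed, for $s\in\text{Int}(f,S)$ we have $g(s,\cdot)=f(s,\cdot)$, which is integrable by definition of the locus, while for $s\notin\text{Int}(f,S)$ the function $g(s,\cdot)$ vanishes; hence $\text{Int}(g,S)=S$ and $g=f$ whenever $s\in\text{Int}(f,S)$. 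Moreover, when $\text{Int}(f,S)$ is definable its indicator is a definable $\ZZ$-valued function on $X$, so $g$ is again $\Lm_2$-constructible, being a product of constructible functions. Thus the entire content of the conjecture is the assertion that $\text{Int}(f,S)$ is definable.

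To establish this definability I would first treat the case of a single integration variable and then try to iterate. For $Y=K$ the idea is to reuse the graph construction from the proof of Theorem \ref{thm:integrals}: writing $f(s,x)=\sum_i a_i q_K^{f_{i0}(s,x)}\prod_j f_{ij}(s,x)$, form the set $G$ together with the constructible function $h(s,\gamma)=c(\gamma)\,\mu(G_{s,\gamma})$, where $c(\gamma)=\sum_i a_i q_K^{\gamma_{i0}}\prod_j \gamma_{ij}$. Because the measures $\mu(G_{s,\gamma})$ are non-negative and their finiteness is a definable condition (Proposition \ref{prop:constructible measure}), one has $\int_{X_s}|f|\,|dx|=\sum_{\gamma\in D_s}|c(\gamma)|\,\mu(G_{s,\gamma})=\sum_{\gamma}|h(s,\gamma)|$, so that $\text{Int}(f,S)=\text{Int}(h,S)$ up to the definable set where some $\mu(G_{s,\gamma})$ is infinite. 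This transports the problem to a function $h$ on $S\times\Z^N$, where Proposition \ref{prop:prep} and Corollary \ref{cor:recti} put $h$ into a rectilinear, piecewise geometric-polynomial form in the value-group variables; on each resulting $\Gamma$-cell the summability of $|h|$ is governed by the leading term, and one must show that the ensuing condition on $s$ (boundedness of the cell, or negativity of a leading ratio, or vanishing of a leading coefficient) is definable.

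The hard part, and the reason the statement is left as a conjecture, lies in two places. First, the last clause above requires that \emph{zero-loci and sign-loci of constructible coefficient functions be definable}, since summing $|h|$ forces one to detect exactly when leading contributions vanish or cancel; in the semi-algebraic and sub-analytic settings of \cite{Clu-Gor-Hal-14} this is handled by a preparation theorem that fixes definable centres and orders the exponents, whereas our $K$-cell decomposition (Theorem \ref{thm:celdec-trans}) supplies centres only up to the non-definable translations $T_\sigma$, and it is unclear whether the weaker decomposition available without definable Skolem functions suffices to expose these cancellations definably. Second, passing from one integration variable to several is not a routine Fubini iteration here: such a reduction would compute $\int_{Y_s}|f|$ as an iterated integral, but the inner integral is itself only constructible on \emph{its} own locus of integrability, so setting up the iteration already presupposes an interpolation statement---exactly what we are trying to prove. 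Breaking this circularity, presumably by a direct analysis of the multivariable integrability locus rather than by iteration, is where I expect the main difficulty to reside.
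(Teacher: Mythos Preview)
This statement is presented in the paper as an open \emph{conjecture}; the paper offers no proof and explicitly leaves it unresolved. There is therefore no argument in the paper to compare your proposal against.

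Your proposal is likewise not a proof but an analysis of the problem, and you say as much. The reduction you give in the first paragraph---that the conjecture is equivalent to the definability of $\text{Int}(f,S)$, via $g:=\mathbf{1}_{\text{Int}(f,S)}\cdot f$---is correct and is not made explicit in the paper, so it is a useful observation. The indicator of a definable set is a definable $\ZZ$-valued function, products of constructible functions are constructible, and the verification that $\text{Int}(g,S)=S$ is immediate.

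Your diagnosis of the obstacles is also on point. The passage from $\int_{X_s}|f|\,|dx|$ to $\sum_{\gamma}|c(\gamma)|\,\mu(G_{s,\gamma})$ already contains the difficulty: the map $\gamma\mapsto |c(\gamma)|$ is the absolute value of a constructible (not definable) function, and showing that its summability locus in $s$ is definable amounts exactly to controlling the sign- and zero-loci of constructible coefficient functions---which, as you note, is handled in \cite{Clu-Gor-Hal-14} via preparation results with definable centres that are not available in the general $P$-minimal setting of this paper. The circularity you point out in the Fubini iteration is a genuine obstruction, not a technicality: one cannot bootstrap from a single variable without already having an interpolation step. These are precisely why the paper records the statement as a conjecture rather than a theorem.
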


Assuming this conjecture, Theorem \ref{thm:integrals} implies the full generalization of the stability result in \cite{Clu-Gor-Hal-14}. 

\begin{corollary}\label{for:fullthm} Let $K$ be a $p$-adic field and $(K, \Z, \Lm_2)$ be a $P$-minimal structure and suppose that the interpolation conjecture is true. Let $S$ be a definable set, and $f:X\subseteq S\times Y\to \AA_{q_K}$ an $\Lm_2$-constructible function. There exists an $\Lm_2$-constructible function $g: S \to \AA_{q_K}$, such that \[g(s) = \int_{X_s}f(s,y)|dy|,\]
whenever $s\in \text{Int}(f,S)$.
\end{corollary}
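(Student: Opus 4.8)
The plan is to combine the interpolation conjecture (Conjecture \ref{cong:interpolation}) with the already-established Theorem \ref{thm:integrals}; once both are in hand, the corollary reduces to a short bookkeeping argument, since all of the analytic content is packaged inside those two inputs.

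First I would apply Conjecture \ref{cong:interpolation} to the given constructible function $f: X \subseteq S \times Y \to \AA_{q_K}$. This produces an $\Lm_2$-constructible function, which I will call $\tilde{f}: X \to \AA_{q_K}$ (it is the conjecture's $g$, renamed to avoid a clash with the $g$ we must construct), enjoying two properties: its locus of integrability is all of $S$, i.e. $\text{Int}(\tilde{f},S)=S$, and $\tilde{f}(s,y)=f(s,y)$ for every $s\in\text{Int}(f,S)$ and every $y$. Next, because $\text{Int}(\tilde{f},S)=S$, the hypotheses of Theorem \ref{thm:integrals} are satisfied by $\tilde{f}$. Applying that theorem yields an $\Lm_2$-constructible function $g: S \to \AA_{q_K}$ with $g(s)=\int_{X_s}\tilde{f}(s,y)\,|dy|$ for all $s\in S$.

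I then claim that this $g$ is the function required by the corollary. Indeed, fix $s\in\text{Int}(f,S)$, so that $f(s,\cdot)$ is measurable and integrable and the integral $\int_{X_s}f(s,y)\,|dy|$ is well-defined. By the interpolation property, $f(s,\cdot)$ and $\tilde{f}(s,\cdot)$ agree at every point of the fiber $X_s$, so their integrals over $X_s$ coincide; hence $g(s)=\int_{X_s}\tilde{f}(s,y)\,|dy|=\int_{X_s}f(s,y)\,|dy|$, which is exactly the asserted identity on $\text{Int}(f,S)$.

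I do not expect any genuine internal obstacle in this argument: the difficulty is entirely displaced into Conjecture \ref{cong:interpolation} (whose truth we are assuming) and into the proof of Theorem \ref{thm:integrals}. The single point deserving a line of care is that the interpolation statement provides \emph{pointwise} agreement of $f$ and $\tilde{f}$ on fibers over $\text{Int}(f,S)$, and it is precisely this pointwise agreement on $X_s$ that licenses replacing $f$ by $\tilde{f}$ inside the integral and thereby transferring the formula of Theorem \ref{thm:integrals} from the globally integrable $\tilde{f}$ back to $f$.
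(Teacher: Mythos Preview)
Your argument is correct and matches the paper's intended approach: the paper does not spell out a proof of this corollary, but simply remarks that assuming the interpolation conjecture, Theorem~\ref{thm:integrals} immediately yields the result, which is exactly the two-step reduction you carry out. The only cosmetic difference is that you make explicit the pointwise-agreement step on fibers over $\text{Int}(f,S)$, which the paper leaves implicit.
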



\appendix
\section{Definably well-ordered structures}\label{appendix}
\begin{defn} \label{def:dwo}
A structure $(M, \Lm)$ is said to be \emph{definably well-ordered} if there exists a definable linear order $\lhd$ on $M$, such that every definable subset of $M$ has a $\lhd$-minimal element. 
\end{defn}

\begin{lem}\label{lem.elem} Suppose that $(M, \Lm)$ is definably well-ordered, and that $\lhd$ is defined by an $\Lm(a)$-formula. Then every structure $(N, \Lm(a))$ which is elementarily equivalent to $(M, \Lm(a))$, is definably well-orderable. 
\end{lem}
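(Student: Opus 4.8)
The plan is to transfer both the order and its well-foundedness property to $N$ by means of a first-order scheme. Let $\psi(x,y)$ be the $\Lm(a)$-formula defining $\lhd$ on $M$. Since $\Lm(a)$ treats $a$ as a distinguished constant, the very same formula $\psi$ (interpreted in $N$ using the interpretation of $a$ there) defines a binary relation on $N$, and this relation is the order I propose as a witness to the definable well-orderability of $N$.

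First I would observe that ``$\psi$ defines a linear order'' is expressible by a single $\Lm(a)$-sentence $\theta_0$, obtained by conjoining antisymmetry, transitivity and totality of $\psi$. Since $(M,\Lm(a)) \models \theta_0$ and $(M,\Lm(a)) \equiv (N,\Lm(a))$, we obtain $(N,\Lm(a)) \models \theta_0$, so $\lhd$ is a definable linear order on $N$.

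The substantive point is that definable well-orderability is not a single sentence but a scheme. For each $\Lm(a)$-formula $\phi(x,\bar y)$, let $\theta_\phi$ be the $\Lm(a)$-sentence
\[
\forall \bar y \left[\, \exists x\, \phi(x,\bar y) \to \exists x \left( \phi(x,\bar y) \wedge \forall z\, (\phi(z,\bar y) \to (x \lhd z \vee x = z)) \right) \right],
\]
which asserts that every nonempty subset definable by $\phi$ with arbitrary parameters $\bar y$ possesses a $\lhd$-minimal element. Because $(M,\Lm)$ is definably well-ordered by $\lhd$, every definable subset of $M$, and in particular every instance $\phi(M,\bar b)$, has a $\lhd$-minimal element; hence $(M,\Lm(a)) \models \theta_\phi$ for every $\phi$. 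By elementary equivalence, $(N,\Lm(a)) \models \theta_\phi$ for every $\phi$ as well. To conclude, note that any definable subset $A \subseteq N$ is of the form $\phi(N,\bar b)$ for some $\Lm(a)$-formula $\phi(x,\bar y)$ and some parameter tuple $\bar b \in N$, this being exactly what ``definable with parameters'' means in the $\Lm(a)$-structure $N$. Instantiating $\theta_\phi$ at $\bar y = \bar b$ shows that $A$, if nonempty, has a $\lhd$-minimal element, so $\lhd$ definably well-orders $N$.

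The only point requiring care, and the main (mild) obstacle, is precisely the schematic nature of the well-ordering condition: one cannot capture ``every definable set has a $\lhd$-minimal element'' by a single first-order sentence, so the argument must transfer each instance $\theta_\phi$ separately and then observe that every parameter-definable subset of $N$ falls under one such instance. Provided one is careful to have the sentences $\theta_\phi$ quantify universally over the parameter tuple $\bar y$, the transfer is immediate and no compactness or saturation argument is needed.
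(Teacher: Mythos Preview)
Your proof is correct and follows essentially the same approach as the paper: write down, for each $\Lm(a)$-formula $\phi(x,\bar y)$, the sentence asserting that every nonempty $\phi$-definable set has a $\lhd$-minimal element, and transfer these sentences to $N$ by elementary equivalence. The only difference is that you spell out explicitly the verification that $\lhd$ remains a linear order on $N$, which the paper leaves implicit.
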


\begin{proof}
Let $\phi(x,y)$ be an $\Lm(a)$-formula with $length(x)=1$. By definition, 
\[M\models (\forall y )[(\exists x) \phi(x,y)\to (\exists x) (\phi(x,y)\wedge (\forall z) [\phi(z,y)\to x\unlhd z])].
\]
Since $N$ and $M$ are elementarily equivalent as $\Lm(a)$ structures, this implies that every definable subset of $N$ has a $\lhd$-minimal element.
\end{proof}
\noindent The previous lemma shows that being a definably well-ordered structure is a property of $Th(M,a)$, where $a$ is a tuple of parameters used in a formula defining a linear order that satisfies the requirements of Definition \ref{def:dwo}.  We say that a theory $T$ is definably well-orderable if it has some definably well-ordered model where the linear order is 0-definable. The following lemma shows the relation with cartesian powers:  

\begin{lem}\label{lem.cartesian} The following are equivalent:
\begin{enumerate}
	\item $(M, \Lm)$ is definably well-ordered; 
	\item There is an $\Lm(M)$-definable linear order $\lhd$ on $M^n$ such that any definable subset of $M^n$ has a $\lhd$-minimal element.
\end{enumerate}
\end{lem}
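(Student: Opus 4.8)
The plan is to prove the two implications separately, with essentially all the content residing in $(1)\Rightarrow(2)$; the reverse implication is immediate. For $(2)\Rightarrow(1)$, I would simply specialize to $n=1$: a definable linear order on $M^1=M$ for which every definable subset has a $\lhd$-minimal element is exactly the data witnessing that $(M,\Lm)$ is definably well-ordered.

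For $(1)\Rightarrow(2)$, the idea is to transport the given order $\lhd$ on $M$ to $M^n$ by the lexicographic construction, and then verify the minimality property by induction on $n$. Concretely, I would define for tuples $\bar{x},\bar{y}\in M^n$ the relation $\bar{x}\lhd_{\mathrm{lex}}\bar{y}$ to hold exactly when, at the least coordinate $i$ where $x_i\neq y_i$, one has $x_i\lhd y_i$. Since this is a first-order condition built uniformly from $\lhd$, the resulting relation $\lhd_{\mathrm{lex}}$ is an $\Lm(M)$-definable linear order on $M^n$ (definable with the same parameters as $\lhd$), and checking that it is a total order is routine from totality of $\lhd$.

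The heart of the argument is the minimal-element property, which I would establish by induction on $n$. The base case $n=1$ is precisely hypothesis $(1)$. For the inductive step, let $A\subseteq M^n$ be a nonempty definable set. Its projection $\Pi_1(A)=\{x_1\in M : (\exists x_2,\ldots,x_n)\,(x_1,\ldots,x_n)\in A\}$ onto the first coordinate is definable (projections of definable sets are definable, via an existential quantifier) and nonempty, so by $(1)$ it has a $\lhd$-minimal element $a_1$. The fiber $A_{a_1}=\{(x_2,\ldots,x_n) : (a_1,x_2,\ldots,x_n)\in A\}$ is then a nonempty definable subset of $M^{n-1}$, and by the induction hypothesis it has a $\lhd_{\mathrm{lex}}$-minimal element $(a_2,\ldots,a_n)$. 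I would then check that $(a_1,\ldots,a_n)$ is $\lhd_{\mathrm{lex}}$-minimal in $A$: any tuple in $A$ whose first coordinate differs from $a_1$ has that coordinate strictly $\lhd$-above $a_1$ by minimality of $a_1$, hence lies $\lhd_{\mathrm{lex}}$-above; and any tuple in $A$ sharing first coordinate $a_1$ has its tail in $A_{a_1}$, hence lies $\lhd_{\mathrm{lex}}$-above by minimality of $(a_2,\ldots,a_n)$.

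No serious obstacle is expected here: the only points requiring a moment's care are that the lexicographic order is genuinely total (rather than merely a partial order) and that projections and fibers of definable sets stay definable. Both are standard—the latter because the language is first-order, so substitution of the parameter $a_1$ together with existential quantification preserves definability—so the induction goes through cleanly.
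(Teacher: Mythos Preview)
Your argument for $(1)\Rightarrow(2)$ is correct and is exactly the paper's approach: take the lexicographic order induced by $\lhd$. You supply the inductive verification of the minimal-element property in detail, whereas the paper simply asserts it; that is fine.

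For $(2)\Rightarrow(1)$, however, there is a discrepancy. You write that you would ``specialize to $n=1$'', but in the paper's reading of the lemma $n$ is a fixed integer, not a parameter you are free to choose: condition $(2)$ hands you a definable linear order on $M^n$ for \emph{some} given $n$, possibly $n>1$, and you must extract from it a suitable order on $M$. The paper does this by fixing any $a\in M^{n-1}$ and observing that if $\phi(x,y)$ defines $\lhd$ on $M^n$, then $\phi(x_1,a;y_1,a)$ defines a linear order on $M$ for which every definable subset of $M$ has a minimal element (since the slice $\{(x_1,a):x_1\in B\}$ is a definable subset of $M^n$ whenever $B\subseteq M$ is definable). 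Your ``take $n=1$'' shortcut only works under the alternative reading that $(2)$ is asserted for all $n$ simultaneously; under the paper's intended reading it leaves a gap. The fix is short --- the slice argument above --- but it is the actual content of this direction.
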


\begin{proof}
That (1) implies (2) follows by equipping $M^n$ with the lexicographic order induced by the definable linear order on $M$. For the converse, suppose that $n>1$ and pick an element  $a\in M^{n-1}$. Let $\phi(x,y)$ be a formula defining $x\lhd y$. Then $\phi(x_1,a;y_1,a)$ defines a well-order on $M$.  
\end{proof}

For a theory to have models which are definably well-ordered is a very strong property. As an example we show that such theories have definable choice, and thus eliminate imaginaries (for definitions we refer to \cite{vdd-98} and \cite{hodges-97}). 

\begin{prop}
An definably well-ordered structure $(M,\Lm)$ has definable choice. 
\end{prop}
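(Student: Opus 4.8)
The plan is to produce, for each definable family of nonempty sets, a definable function selecting one element from each member, taking the $\lhd$-minimal element as the canonical choice. Recall that definable choice requires that for every definable $X \subseteq M^m \times M^n$ there be a definable function $f: \Pi_{M^m}(X) \to M^n$ with $(a, f(a)) \in X$ for every $a$ in the projection.

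First I would invoke Lemma \ref{lem.cartesian} to transport the definable well-ordering $\lhd$ of $M$ to a definable linear order on $M^n$ (the induced lexicographic one) for which every definable subset of $M^n$ has a $\lhd$-minimal element. This is the only place where the passage from $M$ to its cartesian powers is needed, and Lemma \ref{lem.cartesian} supplies exactly this on all powers simultaneously. Then, given $X \subseteq M^m \times M^n$ with projection $A := \Pi_{M^m}(X)$, for each $a \in A$ the fiber $X_a \subseteq M^n$ is definable and nonempty, hence has a $\lhd$-minimal element, which is \emph{unique} because $\lhd$ is a linear order. I would define $f(a)$ to be this element. The graph of $f$ is then cut out by the formula
\[
(a, y) \in X \;\wedge\; (\forall z)\bigl[(a, z) \in X \to y \unlhd z\bigr],
\]
which is an $\Lm(M)$-formula, so $f$ is definable. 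Since the minimal element always exists (by the well-ordering property) and is unique (by linearity), $f$ is a total definable function on $A$ whose graph is contained in $X$, establishing definable choice; elimination of imaginaries then follows by the standard implication from definable choice.

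I do not expect a serious obstacle: the argument amounts to the observation that a definable well-ordering furnishes canonical representatives for definable fibers. The only points requiring care are ensuring the order is available on the ambient power $M^n$ (handled by Lemma \ref{lem.cartesian}) and checking that the defining formula uses only the parameters occurring in the definition of $\lhd$ together with those defining $X$, so that $f$ is definable over the appropriate parameter set.
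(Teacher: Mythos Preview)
Your proposal is correct and follows essentially the same approach as the paper: both take $f(a)$ to be the $\lhd$-minimal element of the fiber $X_a$, using Lemma~\ref{lem.cartesian} to ensure the order on $M^n$ has the required minimality property. The paper additionally records the coherence condition $X_a = X_b \Rightarrow f(a) = f(b)$, which is part of the definition of definable choice; this is immediate from your construction since $f(a)$ depends only on the fiber, and you may wish to state it explicitly.
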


\begin{proof}
Let $X\subseteq M^{m+n}$ be a definable set, and $\lhd$ a fixed definable linear order on $M$, such that definable sets in any cartesian power of $M$ have $\lhd$-minimal elements. Define $f:\Pi_{m}(X)\rightarrow M^n$ to be the function sending $x$ to the $\lhd$-least element in $X_x$. Clearly, if $X_x=X_y$ then $f(x)=f(y)$. 
\end{proof}

\begin{cor}
A definably well-ordered structure $(M, \Lm)$ has definable Skolem functions. 
\end{cor}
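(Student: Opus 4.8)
The plan is to deduce the existence of definable Skolem functions directly from the definable choice property established in the preceding proposition, since the two notions are essentially packaging the same data. Recall that a structure has definable Skolem functions if for every definable family $\{Y_x\}_{x}$ of nonempty sets, there is a definable function $x \mapsto s(x)$ selecting an element $s(x) \in Y_x$. This is precisely what the definable choice function provides, so the corollary should follow almost formally.

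Concretely, I would start from a definable set $X \subseteq M^{m+n}$ and let $U := \Pi_m(X) = \{x \in M^m \mid X_x \neq \emptyset\}$ be the set of parameters over which the fiber is nonempty; this is definable. By the definable choice proposition, the function $f : \Pi_m(X) \to M^n$ sending each $x$ to the $\lhd$-least element of the fiber $X_x$ is definable and satisfies $f(x) \in X_x$ for every $x \in U$. Restricting $f$ to $U$ gives exactly a definable Skolem function witnessing $(\exists y)\big((x,y) \in X\big)$: whenever the fiber is nonempty, $f$ picks out a genuine element of it. Since $X$ was an arbitrary definable set, this holds uniformly for every definable formula, which is the defining requirement of definable Skolem functions.

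The only point requiring a word of care is the bookkeeping that takes us from ``definable choice for a single $X \subseteq M^{m+n}$'' to the standard formulation of Skolem functions phrased in terms of formulas $\phi(x,y)$ with a tuple $y$ of arbitrary length: one simply takes $X = \{(x,y) \mid \phi(x,y)\}$ and invokes the previous proposition for that $X$, noting that the argument there works verbatim for any $n$ because Lemma \ref{lem.cartesian} guarantees a definable linear order with $\lhd$-minimal elements on every cartesian power $M^n$. I do not anticipate a genuine obstacle here; the substantive content lives entirely in the definable choice proposition (and ultimately in the well-ordering hypothesis), and the corollary is the routine translation of definable choice into Skolem form.
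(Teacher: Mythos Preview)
Your proposal is correct and matches the paper's approach: the corollary is stated without proof, as it follows immediately from the preceding proposition that definably well-ordered structures have definable choice. Your write-up simply spells out this immediate implication.
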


\begin{cor}
A definably well-ordered structure $(M, \Lm)$ has uniform elimination of imaginaries. 
\end{cor}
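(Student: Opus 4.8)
The plan is to deduce the corollary directly from the preceding Proposition, which already furnishes definable choice: for every definable $X \subseteq M^{m+n}$ there is a definable function $f \colon \Pi_m(X) \to M^n$ with $f(\bar x) \in X_{\bar x}$ and with the crucial uniformity clause $X_{\bar x} = X_{\bar y} \Rightarrow f(\bar x) = f(\bar y)$, namely the map sending $\bar x$ to the $\lhd$-least point of the fiber $X_{\bar x}$. The classical fact to invoke is that a structure with definable choice has uniform elimination of imaginaries (cf.\ the definitions in the cited references); I would simply reproduce the short coding argument in the present notation.

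Concretely, let $\phi(\bar x, \bar y, \bar z)$ be an $\Lm$-formula that, for each value of the parameter $\bar z$, defines an equivalence relation $E_{\bar z}$ on $M^n$ (with $|\bar x| = |\bar y| = n$). I would form the definable set
\[ X := \{\,((\bar z, \bar x), \bar y) \mid \phi(\bar x, \bar y, \bar z)\,\} \subseteq M^{|\bar z| + n} \times M^n, \]
whose fiber over $(\bar z, \bar x)$ is exactly the class $X_{(\bar z, \bar x)} = [\bar x]_{E_{\bar z}}$; this fiber is nonempty by reflexivity of $E_{\bar z}$, so $\Pi(X)$ is all of $M^{|\bar z|+n}$. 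Applying the Proposition yields a definable $f$ with $f(\bar z, \bar x) \in [\bar x]_{E_{\bar z}}$ and satisfying $[\bar x]_{E_{\bar z}} = [\bar x']_{E_{\bar z'}} \Rightarrow f(\bar z, \bar x) = f(\bar z', \bar x')$. Setting $g(\bar z, \bar x) := f(\bar z, \bar x)$ then provides the desired code.

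The verification is a two-line equivalence, carried out with $\bar z$ fixed to a parameter $\bar c$. For the forward direction, $\bar x \, E_{\bar c}\, \bar x'$ means $[\bar x]_{E_{\bar c}} = [\bar x']_{E_{\bar c}}$, so $g(\bar c, \bar x) = g(\bar c, \bar x')$ by the uniformity clause. For the converse, since $g(\bar c, \bar x) \in [\bar x]_{E_{\bar c}}$ and $g(\bar c, \bar x') \in [\bar x']_{E_{\bar c}}$, equality of the two values forces $\bar x \, E_{\bar c}\, g(\bar c, \bar x) = g(\bar c, \bar x') \, E_{\bar c}\, \bar x'$, whence $\bar x \, E_{\bar c}\, \bar x'$ by symmetry and transitivity. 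Thus $\bar x \, E_{\bar c}\, \bar x' \Leftrightarrow g(\bar c, \bar x) = g(\bar c, \bar x')$ for every $\bar c$, which is precisely uniform elimination of imaginaries.

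I do not expect a genuine obstacle here, as the implication \emph{definable choice $\Rightarrow$ uniform EI} is classical. The only point requiring attention is the bookkeeping with the parameter $\bar z$: the uniformity must be obtained in $\bar z$ so that a single formula codes the whole family $(E_{\bar z})_{\bar z}$ rather than each $E_{\bar c}$ separately. This is exactly why the Proposition is stated with $X_{\bar x} = X_{\bar y} \Rightarrow f(\bar x) = f(\bar y)$ built in, the ambient parameters being absorbed into the first block of coordinates of $X$; keeping $\bar z$ among the base coordinates throughout is what upgrades plain elimination of imaginaries to the uniform version.
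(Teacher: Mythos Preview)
Your proposal is correct and matches the paper's intended route exactly: the paper states this corollary without proof, deriving it from the preceding Proposition on definable choice via the classical implication \emph{definable choice $\Rightarrow$ uniform EI} (with references to \cite{vdd-98} and \cite{hodges-97} for the definitions). You have simply spelled out that standard coding argument in detail.
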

Notice that being a definably well-ordered structure is stronger than having definable choice. For instance, the real field has definable choice, yet by a result of Ramakrishnan in \cite{ramakrishnan-12} every definable order embeds in $(\RR^n,<_{\text{lex}})$. Therefore, no definable linear order has minimal elements for all definable subsets of the real line. Using this, one can show that no reduct of the real field is definably well-orderable. 

Even though definably well-ordered structures have strong model-theoretic properties, they are not always model-theoretically tame. For instance, the theory of arithmetic is definably well-orderable and yet model-theoretically wild. The main example of a tame well-orderable theory is Presburger Arithmetic. That this theory is well-orderable, is a consequence of the following proposition: 

\begin{prop}\label{preswellorder}
Let $\Lm$ be a language containing $\{\leq,-\}$ \sout{(as $\Lm_{Pres}$)}. Then $Th(\ZZ,\Lm)$ is definably well-orderable.
\end{prop}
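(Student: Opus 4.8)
The plan is to exhibit, on the standard model $(\ZZ,\Lm)$, the explicit order $\lhd$ from equation (\ref{eq:order}) and to show that it is a genuine well-order of order type $\omega$ which is moreover $0$-definable; by the definition preceding the statement, this is exactly what it means for $Th(\ZZ,\Lm)$ to be definably well-orderable. First I would check $0$-definability from $\{\le,-\}$ alone: the element $0$ is defined by the formula $z-z=z$ (since $z-z=0$ always, this holds iff $z=0$), and thereafter $-x$ abbreviates the term $0-x$. Thus every ingredient appearing in (\ref{eq:order}) is an $\{\le,-\}$-term or a $\le$-atom, so $\lhd$ is given by a $0$-definable $\Lm$-formula, as the definition requires.

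Next I would verify, by running through the four disjuncts of (\ref{eq:order}), that $\lhd$ enumerates $\ZZ$ precisely as
\[ 0 \lhd 1 \lhd -1 \lhd 2 \lhd -2 \lhd 3 \lhd -3 \lhd \cdots, \]
that is, by increasing absolute value with each positive integer placed immediately before its negative. The clean way to package this is to observe that $\lhd$ is the pullback of the usual order on $\NN$ along the bijection $f\colon \ZZ \to \NN$ given by $f(0)=0$, $f(n)=2n-1$ for $n>0$, and $f(n)=2|n|$ for $n<0$; a short case check shows $x \lhd y \iff f(x)<f(y)$, which simultaneously certifies that $\lhd$ is a strict linear order.

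Since $f$ is an order isomorphism onto $(\NN,<)$, every initial segment of $(\ZZ,\lhd)$ is finite, so $\lhd$ is a genuine well-order: any nonempty $A\subseteq\ZZ$ meets some finite initial segment, and its $\lhd$-least element there is the $\lhd$-least element of $A$. In particular every \emph{definable} subset of $\ZZ$ has a $\lhd$-minimal element, so $(\ZZ,\Lm)$ is definably well-ordered in the sense of Definition \ref{def:dwo} via a $0$-definable order, whence $Th(\ZZ,\Lm)$ is definably well-orderable. There is essentially no hard step here; the only content is the routine verification that (\ref{eq:order}) computes the claimed $\omega$-enumeration. It is worth emphasizing that the mechanism is far stronger than needed, since $\lhd$ well-orders \emph{all} subsets of $\ZZ$, not merely the definable ones, so the sole role of the hypothesis $\{\le,-\}\subseteq\Lm$ is to guarantee that this one fixed well-order is $\Lm$-definable.
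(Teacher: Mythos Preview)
Your proof is correct and follows the same approach as the paper's: both exhibit the order $\lhd$ of equation~(\ref{eq:order}), observe that on $\ZZ$ it gives the enumeration $0\lhd 1\lhd -1\lhd 2\lhd -2\lhd\cdots$ (you make this precise via the bijection $f$ to $(\NN,<)$, the paper simply states it), and conclude that $(\ZZ,\Lm)$ is definably well-ordered by a $0$-definable order. The only difference is that the paper closes by citing Lemma~\ref{lem.elem}, whereas you appeal directly to the definition of a definably well-orderable theory; your route is in fact the cleaner one here, since by that definition it suffices to exhibit a single definably well-ordered model with $0$-definable order, and Lemma~\ref{lem.elem} only becomes relevant when one later wants the conclusion for an arbitrary $\Gamma_K$.

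One tiny technical quibble: you write that ``$-x$ abbreviates the term $0-x$'', but $0$ is not a constant symbol of $\{\le,-\}$, so $0-x$ is not literally a term. This is harmless, since unary negation is term-definable anyway via $(x-x)-x$, and every occurrence of $0$ in (\ref{eq:order}) can be replaced by $x-x$ or $y-y$; your conclusion that $\lhd$ is $0$-definable from $\{\le,-\}$ stands.
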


\begin{proof}
Consider the following definable order

\[x\lhd y \Leftrightarrow 
\begin{cases}
0\leq x<y \\
0\leq x<-y \\
0\leq -x<y\\
0\leq -x<-y \\
\end{cases}
\]

On $\ZZ$ this defines the following well-order: 
$$0 \lhd 1 \lhd -1 \lhd 2 \lhd -2 \lhd \cdots  .$$

Because of lemma \ref{lem.elem}, this completes the proof. 
\end{proof}

Notice that the linear ordering defined in the previous proposition does not necessarily define  a well-order on every $\ZZ$-group $G$. However, it does define a linear order such that for any definable subset $A\subseteq G$, $A$ has a $\lhd$-minimal element. As a corollary we get a result from \cite{clu-presb03} 

\begin{cor}
Presburger arithmetic has elimination of imaginaries. 
\end{cor}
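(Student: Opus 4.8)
The plan is to assemble the corollary directly from the two ingredients already established in this appendix: the fact (Proposition \ref{preswellorder}) that $Th(\ZZ,\Lpres)$ is definably well-orderable, and the corollary above, that any definably well-ordered structure has uniform elimination of imaginaries. First I would verify that Presburger arithmetic meets the hypotheses of Proposition \ref{preswellorder}. The language $\Lpres=(+,-,<,\equiv_n)$ contains $-$, and $\leqslant$ is quantifier-free definable from $<$ and equality, so $\{\leqslant,-\}$ is available in $\Lpres$. Hence Proposition \ref{preswellorder} applies and yields the order $\lhd$ (cf.\ \eqref{eq:order}) witnessing that $(\ZZ,\Lpres)$ is definably well-ordered. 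The decisive feature to record at this stage is that \emph{no parameters} enter the definition of $\lhd$: it is built solely from $\leqslant$ and $-$, hence is $0$-definable.

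Next I would invoke the corollary that a definably well-ordered structure has uniform elimination of imaginaries, applied to $(\ZZ,\Lpres)$. Concretely, given a definable equivalence relation $E$ on some $\ZZ^n$ (possibly defined with parameters), one sends each class to its $\lhd$-least element. By Lemma \ref{lem.cartesian} the one-variable property lifts to $\ZZ^n$ via the lexicographic extension of $\lhd$, so every definable subset of $\ZZ^n$ — in particular every $E$-class — has a $\lhd$-minimal element. This makes the assignment a well-defined definable choice function and thus a canonical code for the class, which is exactly what elimination of imaginaries requires.

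The step I expect to require the most care is the passage from elimination of imaginaries for the single structure $(\ZZ,\Lpres)$ to elimination of imaginaries for the theory, i.e.\ for every model of Presburger arithmetic. Here I would exploit that the coding function is uniform in the defining parameters of $E$ and, crucially, that $\lhd$ is $0$-definable: uniformity then lets one express ``$c$ is the code of the $E$-class of $x$'' by a single formula scheme independent of the model, making the property elementary and transferring it along elementary equivalence to every model — this is precisely the role played by Lemma \ref{lem.elem} at the level of the order. I would emphasize here that one must \emph{not} appeal to $\lhd$ being a genuine well-order in nonstandard models; the remark preceding this corollary warns that it need not be, and the coding argument only consumes the weaker, elementary property that definable sets possess $\lhd$-minimal elements. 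With that caveat respected, the chain Proposition \ref{preswellorder} $\Rightarrow$ uniform elimination of imaginaries $\Rightarrow$ elimination of imaginaries for $Th(\ZZ,\Lpres)$ closes, recovering the result of \cite{clu-presb03}.
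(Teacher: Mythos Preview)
Your proposal is correct and follows exactly the route the paper intends: the corollary is stated without proof, and you have spelled out the implicit chain Proposition~\ref{preswellorder} $\Rightarrow$ definably well-ordered $\Rightarrow$ uniform elimination of imaginaries, together with the transfer to all models via Lemma~\ref{lem.elem} and the $0$-definability of~$\lhd$. Your remark that only $\lhd$-minima of \emph{definable} sets are needed (not a genuine well-order in nonstandard models) is exactly the caveat the paper makes just before the corollary.
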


\bibliographystyle{amsplain}
\bibliography{../Bibliografie}

\end{document}